\numberwithin{equation}{section}
\numberwithin{equation}{subsection}
\theoremstyle{plain}
\newtheorem{theorem}[equation]{Theorem}
\newtheorem{lemma}[equation]{Lemma}
\newtheorem{proposition}[equation]{Proposition}
\newtheorem{corollary}[equation]{Corollary}
\theoremstyle{definition}
\newtheorem{example}[equation]{Example}
\newtheorem{remark}[equation]{Remark}
\newtheorem{definition}[equation]{Definition}
\newcommand{\ssw}{{\mathfrak{ sw}}}
\def\C{\mathbb C}
\def\Q{\mathbb Q}
\def\Z{\mathbb Z}
\newcommand{\calv}{{\mathcal V}}
\newcommand{\cali}{{\mathcal I}}
\newcommand{\calO}{{\mathcal O}}
\newcommand{\calS}{{\mathcal S}}
\newcommand{\calL}{\mathcal{L}}
\newcommand{\tX}{\widetilde{X}}
\newcommand{\cS}{{\mathcal S}}\newcommand{\cV}{{\mathcal V}}
\newcommand{\Pam}{{\mathrm {Path}}^\uparrow}
\newcommand{\cP}{{\mathcal P}}\newcommand{\cPa}{{\mathcal P}^\uparrow}
\newcommand{\cO}{{\mathcal O}}
\newcommand{\zbz}{Z_{B_0}}
\newcommand{\zbe}{Z_{B_1}}
\newcommand{\zm}{Z_{B_m}}
\newcommand{\zi}{Z_{B_i}}
\newcommand{\zj}{Z_{B_j}}
\newcommand{\pic}{{\rm Pic}}
\newcommand{\bt}{{\mathbf t}}
\newcommand{\bZ}{{\mathbb{Z}}}
\newcommand{\bQ}{{\mathbb{Q}}}
\author{J\'anos Nagy}
\address{Central European University, Dept. of Mathematics,  Budapest, Hungary}
\email{nagy\textunderscore janos@phd.ceu.edu}
\author{Andr\'as N\'emethi}
\address{Alfr\'ed R\'enyi Institute of Mathematics,
Hungarian Academy of Sciences,
Re\'altanoda utca 13-15, H-1053, Budapest, Hungary \newline
 \hspace*{4mm} ELTE - University of Budapest, Dept. of Geometry, Budapest, Hungary \newline \hspace*{4mm}
BCAM - Basque Center for Applied Math.,
Mazarredo, 14 E48009 Bilbao, Basque Country – Spain}
\email{nemethi.andras@renyi.mta.hu }
\title{On the  topology of elliptic singularities}
\begin{document}

\keywords{normal surface singularity,
resolution  graph, rational homology sphere,
elliptic singularities, elliptic sequence, Seiberg--Witten invariant, surgery formula,
Poincar\'e series, geometric genus, periodic constant}

\subjclass[2010]{Primary. 32S05, 32S25, 32S50, 57M27
Secondary. 14Bxx, 14J80}

\begin{abstract}
For any elliptic normal surface singularity with rational homology sphere link we
 consider a  new elliptic sequence, which differs   from the one introduced by
 Laufer and S. S.-T. Yau.
 However, we show that their length coincide. Using the properties of both sequences  we succeed
 to connect the common length with the geometric genus and also with several topological invariants,
 e.g. with the Seiberg--Witten invariant of the link.
\end{abstract}

\maketitle

\linespread{1.2}

%\date{}

\begin{center}
{\em  Dedicated to Gert--Martin Greuel}
\end{center}

\pagestyle{myheadings} \markboth{{\normalsize  J. Nagy, A. N\'emethi}} {{\normalsize On the topology of elliptic singularities }}

%\setcounter{tocdepth}{1}
%\tableofcontents

\section{Introduction}\label{s:intr}
\subsection{} The most important analytic invariant of a complex normal surface singularity $(X,o)$ is
its geometric genus $p_g$. Even if we fix a topological type --- usually identified by the link $M$
of the germ, or by a resolution graph ---, and even if we assume that the link $M$ is a rational homology sphere,
the geometric genus might vary when we vary the analytic structure. Hence, it is natural to find
 topological bounds for it. In the literature there are  several topological invariants, which are related with $p_g$
 in this sense.

 One of them is $\Pam$, cf. \cite{trieste,NS16,NO17}, see subsection \ref{s:Pathi} below.
 It is a topological upper bound for $p_g$, that is, for any analytic structure one
 has $p_g\leq \Pam$. However, usually it is hard to verify whether the inequality is
 optimal or not for a certain topological type, that is, whether a special analytic structure realizes the equality. (One knows topological types when the inequality is not sharp, see Example \ref{ex:pGpathno}.)

 Another topological invariant is the (modified) Seiberg--Witten invariant
$ \overline{\ssw}_{0}(M))$
 of the link  (associated with the canonical $spin^c$--structure). It is related with the geometric genus via the Seiberg--Witten Invariant Conjecture (SWIC) $p_g(X,o)=\overline{\ssw}_{0}(M))$, cf.
\cite{trieste,NCL,NO08,NOk,NS16,NWCasson}, which is expected to be true  for certain special analytic structures. But, again, the verification of this identity usually is hard (and in some cases it is not even true).

In the case of elliptic singularities there is another topological numerical invariant, the length of the elliptic
sequence introduced by Laufer and S. S.-T. Yau \cite{Yau5,Yau1}. In the numerically Gorenstein case
(when a Gorenstein structure exist) it is easier to connect with $p_g$ and  $\Pam$, however in the general case the
Yau's elliptic sequence is rather complicated 
 (and it is also hard to connect with possible analytic realizations).

In order to eliminate these difficulties, 
 we consider  a new elliptic sequence, which in the non--numerically Gorenstein case is different than the one studied by Yau, and which fits much better in such  comparisons.
It was motivated (and  introduced) in  the author's study of the Abel map of  surface singularities \cite{NNIII}, and it has several advantages compared with the earlier approaches.
E.g., it identifies the support of a numerically Gorenstein subgraph with the following property.
If the analytic type supported  on this subgraph is Gorenstein, that $p_g(X,o)$  is maximal, and it satisfies the identity
$p_g(X,o)=\Pam$ (and the statement (3) from below).

In this note first we prove that the length  $\ell+1$ of the Yau's elliptic sequence coincides with the length
$m+1$ of our elliptic sequence. Then using properties of both sequences we prove the following statements for any elliptic germ with rational homology sphere link:

\vspace{1mm}

(1) $m+1=\ell+1$;

\vspace{1mm}

(2) $m+1=\Pam$;

\vspace{1mm}

(3) there exists an analytic structure (characterized precisely) supported on the fixed
elliptic topological type such that $p_g=m+1$;

\vspace{1mm}

(4) $m+1=\overline{\ssw}_{0}(M))$, in particular, for any analytic structure from (2) the SWIC holds.

\vspace{2mm}

Strictly speaking, in the proof of (2) we use an additional assumption, namely that the 
minimal resolution is good. The main reason for this assumption is that 
the elliptic sequences are defined (and have nice properties) in the minimal resolution, 
while the invariant $\Pam$ is defined in via good resolutions. We expect that the 
statement remains valid in any case, but in this note we did not check the compatibility of the two resolutions (the minimal one and the minimal good one) from the point of view of these two set of invariants (and  we didn't carry out the pathological cases either).

\subsection{} The structure of the article is the following. In section \ref{s:prel} we review the standard
notations related with resolution of normal surface singularities, and  we recall some facts regarding $\Pam$.
In the next section we discuss elliptic singularities (we always assume that the link
is a rational homology sphere).
We recall the definition of the elliptic sequence according to Yau, we establish several properties which will be needed later.
Then we discuss the special case of numerically Gorenstein graphs, and finally we provide the definition and several
properties of the `new' elliptic sequence. Finally in Theorem \ref{th:main} we prove (1) and (2).

Section \ref{s:surg} reviews several results regarding surgery properties of the Seiberg--Witten invariants
(based on some coefficient counting of the topological Poincar\'e series),  and
in the last section we prove (3) via such a surgery formula.

\section{Preliminaries and notations}\label{s:prel}
\subsection{Notations regarding a resolution}  \cite{Nfive,trieste,NCL,LPhd,NN1}
Let $(X,o)$ be the germ of a complex analytic normal surface singularity.
We denote by  $p_g$ the geometric genus of $(X,o)$.
{\it We will assume that the link $M$ of $(X,o)$ is a rational homology sphere.}

Let $\phi:\widetilde{X}\to X$ be a   resolution   of $(X,o)$ with
 exceptional curve $E:=\phi^{-1}(0)$,  and  let $\cup_{v\in\calv}E_v$ be
the irreducible decomposition of $E$.
%Define  $E_I:=\sum_{v\in I}E_v$ for any subset $I\subset \calv$.

$L:=H_2(\widetilde{X},\mathbb{Z})$, endowed
with a negative definite intersection form  $(\,,\,)$, is a lattice. It is
freely generated by the classes of  $\{E_v\}_{v\in\mathcal{V}}$.
 The dual lattice is $L'={\rm Hom}_\Z(L,\Z)=\{
l'\in L\otimes \Q\,:\, (l',L)\in\Z\}$. It  is generated
by the (anti)dual classes $\{E^*_v\}_{v\in\mathcal{V}}$ defined
by $(E^{*}_{v},E_{w})=-\delta_{vw}$ (where $\delta_{vw}$ stays for the  Kronecker symbol).
$L'$ is also  identified with $H^2(\tX,\Z)$. %, where the first Chern classes live.

All the $E_v$--coordinates of any $E^*_u$ are strict positive.
We define the Lipman cone as $\calS':=\{l'\in L'\,:\, (l', E_v)\leq 0 \ \mbox{for all $v$}\}$.
As a monoid it is generated over $\bZ_{\geq 0}$ by $\{E^*_v\}_v$.
Write also $\calS:=\calS'\cap L$.

$L$ embeds into $L'$ with
 $ L'/L\simeq H_1(M,\mathbb{Z})$, which is abridged by $H$.
 The class of $l'$ in $H$ is denoted by $[l']$.

There is a natural (partial) ordering of $L'$ and $L$: we write $l_1'\geq l_2'$ if
$l_1'-l_2'=\sum _v r_vE_v$ with all $r_v\geq 0$. We set $L_{\geq 0}=\{l\in L\,:\, l\geq 0\}$ and
$L_{>0}=L_{\geq 0}\setminus \{0\}$.

The support of a cycle $l=\sum n_vE_v$ is defined as  $|l|=\cup_{n_v\not=0}E_v$.

Since $H_1(M,\Q)=0$,
each $E_v$ is rational, and the dual graph of any good resolution is a tree.

\bekezdes\label{ss:mincyc} {\bf Minimal cycles in $L'_{\geq 0}$ and in $\calS'$.}
Consider the semi-open cube $\{\sum_v l'_v E_v\in L' \ | \ 0\leq l'_v<1\}$.
 It contains a unique representative $r_h$ for every $h\in H$ so that $[r_h]=h$.
Similarly,   for any $h\in H$ there is a
 unique minimal element of $\{l'\in L' \ | \ [l']=h\}\cap \mathcal{S}'$,
 which will be denoted by $s_h$ (cf. Lemma \ref{lem:cs2} below).
One has  $s_h\geq r_h$;  in general, $s_h \neq r_h$.

\bekezdes\label{sss:s} {\bf A `Laufer--type' computation sequence targeting $\mathcal{S}'$.}
Recall the following fact:
\begin{lemma}\ \cite{Laufer72}, \cite[Lemma 7.4]{NOSZ}\label{lem:cs2} \
Fix any $l'\in L'$.
\begin{enumerate}
 \item[(1)] There exists a unique minimal element $s(l')$ of $(l'+L_{\geq 0})\cap \mathcal{S}'$.
 \item[(2)] $s(l')$ can be found via the following
  computation sequence $\{z_i\}_i$ connecting $l'$ and $s(l')$:
   set $z_0:=l'$, and assume that $z_i$ ($i\geq 0$) is already constructed. If
   $(z_i,E_{v(i)})>0$ for some $v(i)\in\mathcal{V}$ then  set $z_{i+1}=z_i+E_{v(i)}$. Otherwise
   $z_i\in \mathcal{S}'$ and necessarily  $z_i=s(l')$.
%\item[(3)] $\chi(z_{i+1})\leq \chi(z_i)$ along the (any such)  sequence.
\end{enumerate}
\end{lemma}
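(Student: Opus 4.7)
The plan is to prove (1) and (2) in tandem by first observing that $(l'+L_{\geq 0})\cap\calS'$ is closed under the componentwise minimum, which immediately delivers a unique smallest element $s(l')$, and then verifying that the Laufer-type sequence remains bounded above by $s(l')$ and must terminate at it. The entire argument rests on one elementary fact already stated in the preliminaries: $(E_w,E_v)\ge 0$ whenever $w\ne v$, together with the fact that each $E^*_v$ has strictly positive $E$-coordinates.

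For nonemptiness of $(l'+L_{\geq 0})\cap\calS'$, note that $l'+N\sum_v E^*_v\in\calS'$ and dominates $l'$ once $N$ is a sufficiently large integer. For the min-closure step, take $a,b\in(l'+L_{\geq 0})\cap\calS'$, set $c_v:=\min(a_v,b_v)$ coordinatewise, and observe that $c\ge l'$ is automatic. To check $(c,E_v)\le 0$, fix $v$ and without loss of generality assume $c_v=a_v$, so the cycle $c-a\le 0$ has vanishing $v$-coordinate. Then
\[
(c,E_v)=(a,E_v)+(c-a,E_v),
\]
where $(a,E_v)\le 0$ because $a\in\calS'$, and
\[
(c-a,E_v)=\sum_{w\ne v}(c_w-a_w)(E_w,E_v)\le 0
\]
since each $c_w-a_w\le 0$ and $(E_w,E_v)\ge 0$ for $w\ne v$. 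Hence $c\in\calS'$, so $(l'+L_{\geq 0})\cap\calS'$ is a meet-semilattice. Together with the fact that its coordinates are integrally shifted and bounded below by those of $l'$, this yields a unique minimum $s(l')$, proving (1).

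For (2), I would argue by induction on $i$ that $l'\le z_i\le s(l')$. The base case $z_0=l'$ is immediate. For the inductive step, assume $z_i\le s(l')$ with $(z_i,E_{v(i)})>0$, and set $u:=s(l')-z_i\ge 0$. One must show $u_{v(i)}\ge 1$. If instead $u_{v(i)}=0$, the same intersection-sign computation as above gives
\[
(u,E_{v(i)})=\sum_{w\ne v(i)} u_w\,(E_w,E_{v(i)})\ge 0,
\]
whence $(s(l'),E_{v(i)})=(z_i,E_{v(i)})+(u,E_{v(i)})>0$, contradicting $s(l')\in\calS'$. Thus $z_{i+1}=z_i+E_{v(i)}\le s(l')$. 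The sequence is strictly increasing in the partial order and bounded above by $s(l')$, so it terminates at some $z_N$; at termination $z_N\in(l'+L_{\geq 0})\cap\calS'$ with $z_N\le s(l')$, forcing $z_N=s(l')$ by minimality.

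The only potential pitfall is keeping the intersection-sign bookkeeping straight, but once the sign fact $(E_w,E_v)\ge 0$ for $w\ne v$ is isolated, both the min-closure and the inductive step reduce to the same one-line calculation; no further input (negative-definiteness, tree structure) is required.
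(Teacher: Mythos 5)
Your proof is correct and follows the standard Laufer-style argument used in the cited references (\cite{Laufer72}, \cite[Lemma~7.4]{NOSZ}); the paper itself only cites the lemma without giving a proof, so there is nothing to compare beyond the canonical treatment. The two key ingredients you isolate --- $(E_w,E_v)\ge 0$ for $w\ne v$, giving min-closure of $(l'+L_{\ge 0})\cap\calS'$, and the inductive bound $z_i\le s(l')$ --- are exactly the right ones, and the bookkeeping is sound.

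One small point to tighten: in the non-emptiness step you need $l'+N\sum_v E^*_v$ not merely to lie in $\calS'$ and dominate $l'$, but to lie in $l'+L_{\geq 0}$, which additionally requires $N\sum_v E^*_v\in L$. Since $E^*_v\in L'$ is generally non-integral, this holds only for $N$ divisible by the exponent of $H=L'/L$ (taking $N$ a large multiple of $|H|$ suffices). With that word inserted, the proof is complete.
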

In general the choice of the individual vertex $v(i)$ might not be unique, nevertheless the final
output $s(l')$ is unique.

If we start with an arbitrarily chosen $l'=E_v$ then $s(l')$ is the minimal (fundamental)
cycle $Z_{min}$ of $L$, that is, the minimal element of $\cS\setminus \{0\}$
 \cite{Artin62,Artin66,Laufer72}.
 In this case, the
sequence from part (2) usually is called the `{\it Laufer's computation sequence of $Z_{min}$}'.

Similarly, for any $h\in H$, if $l'=r_h$ then $s(l')=s_h$.

\subsubsection{}
The {\it (anti)canonical cycle} $Z_K\in L'$ is defined by the
{\it adjunction formulae}
$(Z_K, E_v)=(E_v,E_v)+2$ for all $v\in\mathcal{V}$.
(It is the first Chern class of the dual of the line bundle $\Omega^2_{\tX}$.)
We write $\chi:L'\to \Q$ for the combinatorial expression $\chi(l'):= -(l', l'-Z_K)/2$.

The singularity (or, its topological type) is called numerically Gorenstein if $Z_K\in L$.
(Since $Z_K\in L$ if and only if the line bundle $\Omega^2_{X\setminus \{o\}}$ of holomorphic 2--forms
on $X\setminus \{o\}$ is topologically trivial, see e.g. \cite{Du}, the $Z_K\in L$ property
is independent of the resolution). $(X,o)$ is called Gorenstein if $Z_K\in L$ and
$\Omega^2_{\tX}$ (the sheaf of holomorphic 2--forms) is isomorphic to $ \calO_{\tX}(-Z_K)$ (or,
equivalently, if the line bundle $\Omega^2_{X\setminus \{o\}}$ is holomorphically trivial).

Recall that if $\tX$ is a minimal resolution then (by the adjunction formulae) $Z_K\in \calS'$. 
In particular, $Z_K-s_{[Z_K]}\in L_{\geq 0}$.

\begin{lemma}\label{lem:szk}\ \cite[Lemma 2.1.4]{NNIII} \ Consider the minimal resolution
$\tX$ of $(X,o)$.
Then  $p_g=0$  whenever $Z_K=s_{[Z_K]}$.
If $Z_K>s_{[Z_K]}$ then $p_g=h^1(\calO_{Z_K-s_{[Z_K]}})$.
More generally,
$h^1(\tX, \calL)=h^1(Z_K-s_{[Z_K]}, \calL)$  for any $\calL\in \pic(\tX)$ with $c_1(\calL)\in -\calS'$.
\end{lemma}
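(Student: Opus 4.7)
Taking $\calL = \calO_{\tX}$ (for which $c_1(\calL)=0\in -\calS'$ and $h^1(\tX,\calL)=p_g$) reduces the first two statements to the general cohomological identity $h^1(\tX,\calL)=h^1(Z,\calL)$, where $Z := Z_K - s_{[Z_K]}$: when $Z_K = s_{[Z_K]}$ one has $Z=0$ and $h^1(0,\calL)=0$, while for $Z_K>s_{[Z_K]}$ the formula $p_g = h^1(\calO_Z)$ is immediate. Note that $Z\in L_{\geq 0}$ because both $s_{[Z_K]}$ and $Z_K$ lie in $\calS'\cap [Z_K]$ and $s_{[Z_K]}$ is the minimum.

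The plan is a Laufer-type computation sequence argument. Apply Lemma \ref{lem:cs2}(2) with $l' = r_{[Z_K]}$ to produce a sequence $z_0 = r_{[Z_K]}, z_1, \ldots, z_t = s_{[Z_K]}$ with $z_{i+1} = z_i + E_{v(i)}$ and $(z_i, E_{v(i)}) \geq 1$. Define its \emph{reverse mirror} $y_j := Z_K - z_{t-j} \in L_{\geq 0}$; this is a sequence from $y_0 = Z$ to $y_t = Z_K - r_{[Z_K]}$ with $y_{j+1} = y_j + E_{u(j)}$ for $u(j) := v(t-j-1)$. Using $(Z_K, E_v) = E_v^2 + 2$ and $(z_{t-j}, E_{u(j)}) = (z_{t-j-1}, E_{u(j)}) + E_{u(j)}^2 \geq 1 + E_{u(j)}^2$, one obtains the key estimate
$$(y_j, E_{u(j)}) = (Z_K, E_{u(j)}) - (z_{t-j}, E_{u(j)}) \leq 1.$$

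Combined with $(c_1(\calL), E_{u(j)}) \geq 0$, this gives $\deg\big(\calL(-y_j)|_{E_{u(j)}}\big) \geq -1$, so $H^1(E_{u(j)}, \calL(-y_j)|_{E_{u(j)}})=0$. The restriction sequence
$$0 \to \calL \otimes \calO_{E_{u(j)}}(-y_j) \to \calL|_{y_{j+1}} \to \calL|_{y_j} \to 0$$
therefore yields $h^1(y_{j+1}, \calL) \cong h^1(y_j, \calL)$ inductively, so $h^1(Z, \calL) = h^1(Z_K - r_{[Z_K]}, \calL)$.

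The remaining identification $h^1(Z_K - r_{[Z_K]}, \calL) = h^1(\tX, \calL)$ is the main obstacle. Via the exact sequence $0 \to \calL(-(Z_K - r_{[Z_K]})) \to \calL \to \calL|_{Z_K - r_{[Z_K]}} \to 0$, it reduces to the vanishing $H^1(\tX, \omega_{\tX}(r_{[Z_K]}) \otimes \calL) = 0$. Since $r_{[Z_K]} \in L'\setminus L$ in general and $c_1(\calL) + r_{[Z_K]}$ need not be $\phi$-nef, the standard Grauert--Riemenschneider vanishing does not apply verbatim; this step must be established via a refinement of GR for line bundles with $L'$-valued Chern class on $\tX$, or equivalently by continuing the reverse-mirror computation one further stage into a cycle that is Laufer-sufficient for $\calL$.
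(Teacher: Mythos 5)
Your proposal follows the same route as the paper's own proof. The paper runs the Laufer computation sequence $z_0=r_{[Z_K]},\ldots,z_t=s_{[Z_K]}$ from Lemma \ref{lem:cs2} and shows inductively that $h^1(Z_K-z_i,\calL)$ is constant; your reverse--mirror sequence $y_j=Z_K-z_{t-j}$ is exactly that sequence read backwards, and your bound $(y_j,E_{u(j)})\leq 1$, combined with $(c_1(\calL),E_v)\geq 0$, is exactly the estimate that makes each inductive step go through. So the computation-sequence part of the argument is correct and coincides with the paper's.

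The step you flag as the ``main obstacle,'' namely $H^1(\tX,\calL(-\lfloor Z_K\rfloor))=0$, is precisely what the paper invokes at the very start as the ``generalized Kodaira or Grauert--Riemenschneider vanishing,'' so it is not left as a gap there but treated as a known citable result. Your concern that $r_{[Z_K]}\notin L$ and that $c_1(\calL)+r_{[Z_K]}$ need not be $\phi$-nef is addressed exactly by the $\Q$-boundary (Kawamata--Viehweg) form of this vanishing: taking boundary $B=Z_K-\lfloor Z_K\rfloor=r_{[Z_K]}$, whose coefficients lie in $[0,1)$, the twist $\calL(-\lfloor Z_K\rfloor)\otimes\omega_{\tX}^{-1}(-B)$ has intersection number with each $E_v$ equal to $(c_1(\calL),E_v)+(Z_K-\lfloor Z_K\rfloor-B,E_v)=(c_1(\calL),E_v)\geq 0$, i.e.\ it is $\phi$-nef (and $\phi$-bigness is automatic for a birational map), which gives the desired vanishing. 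In short: once you cite this standard relative vanishing rather than listing it as an unresolved obstacle, your write-up becomes a complete proof and is essentially the paper's argument.
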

\begin{proof} By generalized Kodaira or Grauert--Riemenschneider  vanishing
$h^1(\widetilde{X}, \mathcal{O}_{\widetilde{X}}(-\lfloor Z_K\rfloor))=0$).
Hence, if $\lfloor Z_K\rfloor)=0$ then $p_g=0$. Otherwise, using the exact sequence
$0\to \mathcal{O}_{\widetilde{X}}(-\lfloor Z_K\rfloor)\to \mathcal{O}_{\widetilde{X}}
\to \mathcal{O}_{\lfloor Z_K\rfloor}\to 0$ we get $h^1(\calO_{\lfloor Z_K\rfloor})=p_g$.
Next, consider the computation sequence from Lemma \ref{lem:cs2}
applied for $l'=r_h$ and show  by induction that
$h^1(\calO_{Z_K-z_i})=p_g$.

More generally, $h^1(\tX,\calL)=h^1(Z_K-z_i,\calL)$ for any $i$ by similar argument.
\end{proof}

\subsection{The invariant $\Pam$}\label{s:Pathi} Assume that at this time $\tX$ is the 
minimal good resolution. In this case, $Z_K\geq 0$, see e.g. \cite{Wim,PPP,NBOOK}. 
In particular, $\lfloor Z_K\rfloor \in L_{\geq 0}$. 
Let ${\mathcal K}$ be the (topologically defined) set of cycles 
$\lfloor Z_K\rfloor +L_{\geq 0}$.
Note that by a generalized Grauert--Riemenschneider vanishing \cite{GrRie}
 $h^1(\cO_Z)=p_g$ for any $Z\in {\mathcal K}$.

An {\it increasing path}  is a sequence of integral cycles
$\gamma:=\{l_i\}_{i=0}^t$, $l_i\in L$ such that $l_0=0$, $l_t\in {\mathcal K}$,  and for any $i<t$ one has $l_{i+1}=l_i+ E_{v(i)}$
for some $v(i)\in \cV$.  Denote
by $\cP^\uparrow$ the set of increasing paths. Moreover,
for any $\gamma\in\cPa$ and  $i<t$ define
\begin{equation}\label{eq:Bi}
p_i=\max\{0,\, \chi(l_{i})-\chi(l_{i+1})\}=\max\{0,(E_{v(i)},l_i)-1\}, \end{equation}
and set $S(\gamma):=\sum_{ i< t} p_i$ for any $\gamma\in\cPa$.
Furthermore, set
$\Pam:=\min_{\gamma\in \cPa}S(\gamma)$ as well.

The definition is mostly motivated by comparison of the geometric genus with
path lattice cohomology \cite{lattice}, see also \cite{NS16,NO17}.

\bekezdes \label{bek:pgpath} {\bf Upper bounds for the geometric genus.}
If $\gamma\in \cPa$ with $l_t=Z$  then
 $p_g=h^1(\cO_Z)$. Furthermore,   from the
exact sequence $0\to \cO_{E_{v(i)}}(-l_i)\to \cO_{l_{i+1}}\to \cO_{l_i}\to 0$  we get
\begin{equation*}
h^1(\cO_{l_{i+1}})-h^1(\cO_{l_i})\leq h^1(\cO_{E_{v(i)}}(-l_i)) = p_i\ \ \ \ \ (0\leq i<t).
\end{equation*}
In particular, for any analytic structure with the fixed resolution graph $\Gamma$ one has
\begin{equation}\label{eq:pgpath}
p_g\leq \Pam.\end{equation}
Equality %$p_g=\Pac$
holds if {\it for some} $\gamma\in\cPa$
 the above cohomology  exact sequences split for all $i$.
 The above inequality
$p_g\leq \Pam$ looks slightly artificial, even naive; nevertheless, for rather important
analytic structures along a well--chosen increasing path all the cohomology exact sequences  split, and the equality  $p_g= \Pam$ holds.
The equality  $p_g=\Pam$ is realized by the following analytic families (with rational homology sphere link):
(a) weighted homogeneous singularities;
(b)  superisolated singularities;
(c) Newton non-deg hypersurfaces;
(d) rational singularities;
(e) Gorenstein elliptic singularities.
(For details and further references see \cite{NS16,NBOOK}).

 In Theorem \ref{th:main}
 we will show that the equality in  $p_g\leq \Pam $ can be realized in the case of any
 non--numerically Gorenstein elliptic topological type  as well.

\begin{example}\label{ex:pGpathno} \  \cite{NO17}
On the other hand, one can find topological types of singularities
(even  with integral homology sphere link) such that for {\it any} analytic structure
the strict inequality $p_g<\Pam$ holds:
For the next graph
 $\Pam=4$, nevertheless for all analytic structures $2\leq p_g\leq 3$.

\begin{picture}(300,45)(30,0)
\put(125,25){\circle*{4}}
\put(150,25){\circle*{4}}
\put(175,25){\circle*{4}}
\put(200,25){\circle*{4}}
\put(225,25){\circle*{4}}
\put(150,5){\circle*{4}}
\put(200,5){\circle*{4}}
\put(125,25){\line(1,0){100}}
\put(150,25){\line(0,-1){20}}
\put(200,25){\line(0,-1){20}}
\put(125,35){\makebox(0,0){$-3$}}
\put(150,35){\makebox(0,0){$-1$}}
\put(175,35){\makebox(0,0){$-13$}}
\put(200,35){\makebox(0,0){$-1$}}
\put(225,35){\makebox(0,0){$-3$}}
\put(160,5){\makebox(0,0){$-2$}}
\put(210,5){\makebox(0,0){$-2$}}
\end{picture}

\end{example}

\subsection{Paths with fixed end--cycles} \label{bek:ineqPath}
We fix an arbitrary $Z\in L_{>0}$.
 We extend the above definition by taking paths $\gamma$ with
end--cycle $l_t$ exactly $Z$. Accordingly,
%to  fix the end of $\gamma$\,:
%Correspondingly, we have  similar definitions. E.g.,
for any such fixed $Z$, we set $\Pam(Z):=
\min_\gamma S(\gamma)$, where $\gamma$ runs over all increasing pathes  with $l_0=0$ and $l_t=Z$.
%For simplicity we will assume that $\lfloor Z_K\rfloor>0$ (the general case can be adopted easily).
By similar argument as in subsection \ref{bek:pgpath} one obtains for any $Z>0$
\begin{equation}\label{eq:pgpathZ}
h^1(\cO_Z)\leq \Pam(Z).\end{equation}

\begin{lemma}\label{lem:MINgamma}  The following facts hold:

(a) If $Z_1\leq Z_2$ then $\Pam(Z_1)\leq \Pam(Z_2)$ (this is true for 
any good resolution graph).

(b) Assume additionally that $\lfloor Z_K\rfloor>0$ (e.g., when the resolution graph
is minimal good). 
If $\lfloor Z_K\rfloor \leq Z$ then $\Pam(\lfloor Z_K\rfloor )=\Pam(Z)$.
In particular, $\Pam(\lfloor Z_K\rfloor )=\max_{Z>0} \Pam(Z)=\rm \Pam$.
\end{lemma}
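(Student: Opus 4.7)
For part (a), the natural approach is to convert any increasing path $\gamma_2 = (0, l_1, \ldots, l_t = Z_2)$ with vertex sequence $v(0), \ldots, v(t-1)$ into an increasing path to $Z_1$ of no greater cost by extracting a subsequence: keep step $i$ precisely when the coefficient of $v(i)$ in $l_i$ is still strictly less than $(Z_1)_{v(i)}$. Since $Z_1 \le Z_2$, each vertex $v$ occurs at least $(Z_1)_v$ times and we retain exactly $(Z_1)_v$ of its occurrences, so the resulting subpath ends at $Z_1$. At a kept step of original index $i$ the coefficient of $v(i)$ in the new partial sum $l'$ matches that in $l_i$ by construction, while the coefficients at other vertices can only be smaller. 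Combined with $(E_v, E_u) \ge 0$ for $u \neq v$, this forces $(E_{v(i)}, l') \le (E_{v(i)}, l_i)$, hence the step cost does not increase. Summing gives $S(\gamma_1) \le S(\gamma_2)$, and minimising yields $\Pam(Z_1) \le \Pam(Z_2)$.

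For part (b), the inequality $\Pam(\lfloor Z_K \rfloor) \le \Pam(Z)$ follows from (a). For the reverse, I would prepend an optimal path $\gamma_K\colon 0 \to \lfloor Z_K \rfloor$ to an extension $\lfloor Z_K \rfloor \to Z$ whose every step has cost zero; concatenation then produces a path $0 \to Z$ of cost $\Pam(\lfloor Z_K\rfloor)$. The enabling observation is that $(E_v, \lfloor Z_K \rfloor) \le 1$ for every $v$. Indeed, writing $Z_K = \lfloor Z_K \rfloor + \{Z_K\}$ with $\{Z_K\}_v \in [0, 1)$ and using $(E_v, Z_K) = E_v^2 + 2$, the fact that $E_v^2 < 0$, $\{Z_K\}_v < 1$ strictly and $(E_v, E_u)\{Z_K\}_u \ge 0$ for $u \neq v$ forces $(E_v, \{Z_K\}) > E_v^2$ strictly; hence $(E_v, \lfloor Z_K \rfloor) = E_v^2 + 2 - (E_v, \{Z_K\}) < 2$, and integrality gives $\le 1$. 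In particular any first extension step from $\lfloor Z_K\rfloor$ has cost zero.

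The main obstacle is to carry this extension all the way to $Z$: at every intermediate $l$ with $\lfloor Z_K \rfloor \le l < Z$ one has to find $v \in |Z - l|$ with $(E_v, l) \le 1$ while keeping the remaining cycle $Z - l - E_v$ still reachable by further cost-zero steps. A uniform bound ``$(E_v, l) \le 1$ for every $v$ and every $l \in {\mathcal K}$'' is false (take $l = \lfloor Z_K \rfloor + k E_v$ with $k$ large: then $(E_u, l)$ becomes arbitrarily large for any $u$ adjacent to $v$), so the extension has to be produced algorithmically with foresight, interleaving additions in Laufer style (cf.\ Lemma~\ref{lem:cs2}) so that the self-intersection term at each vertex eventually absorbs the accumulated positive contributions from its neighbours before any $(E_v, l)$ exceeds $1$. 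This is where the bulk of the combinatorial work lies, and it uses the tree structure of the graph together with $\lfloor Z_K \rfloor \in L$. Once such an extension exists one has $\Pam(Z) \le \Pam(\lfloor Z_K \rfloor)$, establishing the equality. The ``in particular'' statement then follows formally: by definition $\Pam = \min_{Z \in {\mathcal K}}\Pam(Z)$ is placed at $\lfloor Z_K \rfloor$ by (a), so $\Pam = \Pam(\lfloor Z_K \rfloor) = \Pam(Z)$ for every $Z \in {\mathcal K}$; for arbitrary $Z > 0$, applying (a) to $Z \le Z \vee \lfloor Z_K \rfloor \in {\mathcal K}$ yields $\Pam(Z) \le \Pam$.
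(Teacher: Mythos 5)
Your part (a) is correct. It is a mild variant of the paper's argument: the paper reduces to $Z_2 = Z_1 + E_v$, deletes the last occurrence of $v$ from a given path, shifts the tail by $E_v$, and inducts, whereas you extract in a single pass the subsequence of steps needed to reach $Z_1$. Both hinge on exactly the same observation: dropping steps can only lower the coefficients of vertices other than $v(i)$ in the running partial sum, and $(E_{v(i)}, E_u) \geq 0$ for $u \neq v(i)$ then forces the step cost not to increase. The two formulations are essentially interchangeable; yours is marginally more economical in avoiding the induction on $Z_2 - Z_1$.

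Part (b) has a genuine gap, which you yourself flag. Your side observation that $(E_v,\lfloor Z_K\rfloor)\leq 1$ for every $v$ is correct and its proof is fine, but it only handles the first extension step, and, as you note, there is no uniform bound that makes a naive greedy forward construction of the cost-zero extension $\lfloor Z_K\rfloor \to Z$ work. The missing idea is to build the extension \emph{backward} from $Z$ down to $\lfloor Z_K\rfloor$. The relevant claim is: for any integral $Z > \lfloor Z_K\rfloor$ there exists $E_v \subset |Z - \lfloor Z_K\rfloor|$ with $\chi(Z-E_v) \leq \chi(Z)$, i.e.\ $(E_v, Z - Z_K) < 0$. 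Suppose not, so $(Z - Z_K, E_v)\geq 0$ for all such $v$. Splitting $\{Z_K\}$ into the part $\{Z_K\}_1$ supported on $|Z-\lfloor Z_K\rfloor|$ and the rest $\{Z_K\}_2$, one gets $(Z - \lfloor Z_K\rfloor - \{Z_K\}_1, E_v) \geq (\{Z_K\}_2, E_v) \geq 0$ for every $E_v$ in the support of the effective cycle $D := Z - \lfloor Z_K\rfloor - \{Z_K\}_1$. Negative definiteness then forces $D = 0$, whence $\{Z_K\}_1 = Z - \lfloor Z_K\rfloor$ is integral with entries in $[0,1)$, hence zero, contradicting $Z > \lfloor Z_K\rfloor$. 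Iterating this claim gives a sequence $Z = Z^{(0)} > Z^{(1)} > \cdots > Z^{(N)} = \lfloor Z_K\rfloor$ with $Z^{(j+1)} = Z^{(j)} - E_{v_j}$ and $\chi(Z^{(j+1)}) \leq \chi(Z^{(j)})$; reversing it produces the desired cost-zero extension, and with part (a) this yields $\Pam(Z) = \Pam(\lfloor Z_K\rfloor)$. The reason the backward direction is the right one is precisely the reachability issue you raise: when peeling off one $E_v$ at a time you only need an existence statement for a single removal (and the negative-definiteness argument supplies one from the excess $|Z - \lfloor Z_K\rfloor|$), whereas building forward would require foresight about which additions remain compatible with landing on $Z$. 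Your final reduction $\Pam(Z) \leq \Pam(Z \vee \lfloor Z_K\rfloor)$ for arbitrary $Z>0$ via part (a) is correct and matches the paper.
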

\begin{proof}
(a) Take  $Z_2=Z_1+E_{v}$.
Fix a path   $\gamma=\{l_i\}_i$ with $l_t=Z_2$.
 Let $k$ be the largest index with $v(k)=v$.  For any
 $i\in\{k+1,\ldots, t\}$ write $l_i=\bar{l}_i+E_v$. Then $\bar{l}_{k+1}=l_k$ and $\bar{l}_t=Z_1$.
We will replace the path $\gamma$ by the path $\bar{\gamma}$ consisting of
$l_0,\ldots, l_k, \bar{l}_{k+2},\ldots, \bar{l}_t$. Note that
for $k+1\leq i< t$ one has
$$\chi(l_i)-\chi(l_{i+1})=\chi(\bar{l}_i)-\chi(\bar{l}_{i+1})+(E_v,E_{v(i)})\geq \chi(\bar{l}_i)-\chi(\bar{l}_{i+1}).$$
Hence $S(\bar{\gamma})\leq S(\gamma)$. Then use induction.

(b) For any $Z>\lfloor Z_K\rfloor $ there exists $E_v\subset |Z-\lfloor Z_K\rfloor |$ such that $\chi(Z-E_v)\leq \chi(Z)$. Indeed, if not, then $(Z-Z_K,E_v)\geq 0$ ($\dag$) for any
 $E_v\subset |Z-\lfloor Z_K\rfloor |$. Let $\{Z_K\}_1 $ be the part of $\{Z_K\}$ supported on
 $|Z-\lfloor Z_K\rfloor |$, and $\{Z_K\}_2=\{Z_K\}-\{Z_K\}_1$. Hence, from ($\dag$),
 $(Z-\lfloor Z_K\rfloor -\{Z_K\}_1,E_v)\geq (\{Z_K\}_2,E_v)\geq 0$. This can happen only if
 $Z-\lfloor Z_K\rfloor -\{Z_K\}_1=0$. But $Z$ is integral, hence $Z=\lfloor Z_K\rfloor$, a contradiction.
Therefore,  the path
  $\gamma$ which realizes $\Pam(\lfloor Z_K\rfloor)$ can be completed to a longer path from
  0 to $Z$ with the same $S(\gamma)$
  (construct inductively a decreasing path from $Z$ to $\lfloor Z_K\rfloor$ via the previous statement).
  Hence $\Pam(Z)\leq \Pam(\lfloor Z_K\rfloor)$.
  Then use (a).
\end{proof}

\section{Elliptic singularities. The elliptic sequences.}\label{s:elliptic}

\subsection{Elliptic singularities} \label{ss:ell}
 Let $Z_{min}\in L$ be the minimal  cycle.
 Recall that $(X,o)$ is called elliptic if
$\chi(Z_{min})=0$, or equivalently, $\min _{l\in L_{>0}}\chi(l)=0$ \cite{Laufer77,Wa70}.
It is known that if we decrease the decorations (Euler numbers), or we take a full subgraph
 of an elliptic graph, then we get either elliptic or a rational graph.

Let $C$ be the minimally elliptic cycle \cite{Laufer77,weakly}, that is, $\chi(C)=0$ and
$\chi(l)>0$ for any $0<l<C$. There is a unique cycle with this property, and if
$\chi(D)=0$ ($D\in L$) then necessarily $C\leq D$. In particular, $C\leq Z_{min}$.
In the sequel we assume that the resolution is minimal.  Then $Z_K\in \calS'$, hence in the numerically
Gorenstein case $Z_{min}\leq Z_K$ by the minimality of $Z_{min}$ in $\calS\setminus 0$.

The minimally elliptic singularities were introduced by Laufer in \cite{Laufer77}.  In a minimal
resolution they are characterized (topologically) by $Z_{min}=Z_K=C$.
Moreover, $(X,o)$ is minimally elliptic if and only if $p_g(X,o)=1$ and $(X,o)$
is Gorenstein.  For details see  \cite{Laufer77,weakly,Nfive}.

For an arbitrary elliptic singularity the minimally elliptic cycle $C$ supports a minimally elliptic singularity
(resolution graph). One has the following lemma of Laufer.

\begin{lemma}\label{lem:12} \ \cite{Laufer77}
Consider the minimal resolution of a minimally elliptic singularity.

(a) \  Let $\{z_i\}_{i=1}^t$ be a computation sequence of $Z_{min}$ with $z_1=E_v$ for some $v$.
Then $\chi(z_i)=1$ for all $i<t$, $(z_i, E_{v(i)})=1$ for all $i<t-1$,
and in the last step $(z_{t-1}, E_{v(t-1)})=2$.

(b) \ Fix any pair $E_0$ and $E_1$ ($E_0\not=E_1$) of irreducible exceptional divisors.
Then there exists a computation sequence for $Z_{min}$ which starts with
$E_1$ (i.e. $z_1=E_1$) and ends with $E_0$ (i.e. $E_{v(t-1)}=E_0$).
Moreover, let $E_0$ be an irreducible component whose coefficient  in $Z_{min}$ is greater than one.
Then there exists a computation sequence for $Z_{min}$ which starts  and ends with $E_0$.
\end{lemma}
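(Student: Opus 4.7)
For (a), the plan is a direct bookkeeping on $\chi$. The adjunction identity $(Z_K, E_v) = E_v^2 + 2$ gives $\chi(E_v) = 1$ for every $v$, hence $\chi(z_1) = 1$. A straightforward expansion yields the one-step relation
\[
\chi(z_{i+1}) - \chi(z_i) \, = \, 1 - (z_i, E_{v(i)}),
\]
which together with $(z_i, E_{v(i)}) \geq 1$ makes $\chi$ non-increasing along the sequence. Since $\chi(z_t) = \chi(Z_{min}) = 0$ and $Z_{min} = C$ is the \emph{minimally} elliptic cycle, the defining property $\chi(l) \geq 1$ for $0 < l < C$ forces $\chi(z_i) = 1$ for every $i < t$. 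The step relation then reads off $(z_i, E_{v(i)}) = 1$ for $i < t-1$, and the unique drop of $\chi$ at the last step forces $(z_{t-1}, E_{v(t-1)}) = 2$.

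For (b), existence of \emph{some} computation sequence starting at $z_1 = E_1$ is Lemma \ref{lem:cs2} applied to $l' = E_1$. To identify its limit with $Z_{min}$, note that in the minimally elliptic case $Z_{min} = Z_K$ has strictly positive coefficients (negative definiteness of the intersection form and $(Z_K, E_v) \leq 0$), hence $Z_{min} \geq E_1$; then $Z_{min} \in (E_1 + L_{\geq 0}) \cap \calS'$, while any element of that intersection lies in $\calS \setminus \{0\}$, so minimality of both $s(E_1)$ in that intersection and of $Z_{min}$ in $\calS\setminus\{0\}$ gives $s(E_1) = Z_{min}$.

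To force the last added vertex to be $E_0$, the key numerical input is
\[
(Z_{min} - E_0,\, E_0) \, = \, (Z_K, E_0) - E_0^2 \, = \, 2,
\]
so $E_0$ is always a valid final addition on top of $Y := Z_{min} - E_0$. The idea is to run Laufer's algorithm from $E_1$ but restrict the allowed additions at each step to those $E_{v(i)}$ with $v(i) \neq 0$ and $z_i + E_{v(i)} \leq Y$, and to show this restricted algorithm reaches $Y$. Two ingredients support this: first, $(Y, E_v) \leq 0$ for all $v \neq 0$ (from $(Z_K, E_v) = E_v^2 + 2 \leq 0$ together with $(E_0, E_v) \geq 0$), so the only way to exit $\calS'$ under the ceiling $Y$ is by adding $E_0$; second, a connectivity argument on the exceptional divisor showing that while $z_i < Y$ some non-$E_0$ vertex $E_v$ in the support of $Y - z_i$ is adjacent to $|z_i|$ with $(z_i, E_v) > 0$. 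Once $z_{t-1} = Y$ is reached, appending $E_0$ yields $z_t = Z_{min}$.

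The moreover clause follows by specializing $E_1 = E_0$: the hypothesis that the multiplicity of $E_0$ in $Z_{min}$ is at least two ensures $E_0 \leq Y$, so the same restricted-Laufer argument applies. The main obstacle I anticipate is the connectivity step — verifying that the restricted algorithm never stalls strictly below $Y$ — which ultimately rests on the precise coefficient structure of $Z_{min} = Z_K$ forced by the minimally elliptic hypothesis, and is where one may need to invoke Laufer's combinatorial analysis of minimally elliptic cycles to make the argument clean.
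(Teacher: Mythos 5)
Part (a) is correct and is essentially Laufer's own computation: $\chi(E_v)=1$, the one-step relation $\chi(z_{i+1})-\chi(z_i)=1-(z_i,E_{v(i)})$ makes $\chi$ non-increasing, and since $Z_{min}=C$ is minimally elliptic the bound $\chi(l)\geq 1$ for $0<l<C$ pins $\chi(z_i)=1$ for all $i<t$, from which the intersection numbers drop out. Note, though, that the paper does not give a proof of this lemma at all (it cites \cite{Laufer77}), so there is no in-paper argument to match against; I assess part (b) on its own terms.

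For part (b), the overall plan (build a computation sequence up to the ceiling $Y:=Z_{min}-E_0$ and then append $E_0$, using $(Y,E_0)=2$ and $(Y,E_v)\leq 0$ for $v\neq 0$) is sound and is indeed the spirit of Laufer's argument. But the restriction you actually impose, namely allowing only additions with ``$v(i)\neq 0$ \emph{and} $z_i+E_{v(i)}\leq Y$,'' breaks the argument: it forbids adding $E_0$ at \emph{any} step, whereas $Y$ has $E_0$-coefficient $\geq 1$ precisely when the coefficient of $E_0$ in $Z_{min}$ is $\geq 2$ --- which is the very hypothesis of the ``moreover'' clause, and can also occur in the first statement of (b). Concretely, starting from $z_1=E_1\neq E_0$ your restricted sequence has $E_0$-coefficient permanently $0$, and starting from $z_1=E_0$ with coefficient $\geq 3$ in $Z_{min}$ it has $E_0$-coefficient permanently $1$; in neither case can the sequence reach $Y$. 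The fix is to drop ``$v(i)\neq 0$'' and keep only the ceiling $z_i+E_{v(i)}\leq Y$, which caps the $E_0$-coefficient automatically. But once one does that, the real content of the lemma becomes the non-stalling claim --- that for every $z$ with $E_1\leq z<Y$ there is some $v$ with $(z,E_v)>0$ and $z+E_v\leq Y$ --- and you rightly flag this as the main obstacle but do not establish it. It is not a routine connectivity observation; Laufer's proof uses the finer combinatorics of minimally elliptic configurations (for instance, that proper connected subconfigurations are rational and the behaviour of their fundamental cycles), and without that ingredient the argument as written is incomplete.
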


\subsubsection{} {\bf Elliptic sequences.}
One of the most important tools in the study of elliptic singularities are the {\it elliptic sequences}.
It is  defined from the combinatorics of the resolution graph.
It  can be regarded also as a sequence of cycles with decreasing supports, or also as resolution graphs of
a sequence
of singularities obtained by contracting the exceptional divisors supported in the
corresponding cycles.
They were introduced by Laufer and S. S.-T. Yau, for the definition in the general
(non--Gorenstein) case see \cite{Yau5,Yau1}. In the numerically Gorenstein case the construction is
simpler, see additionally \cite{weakly,Nfive,OkumaEll} as well.

First  we recall the construction of the sequence in the general
(not necessarily numerically Gorenstein)  case according to S. S.-T.  Yau, and we list several properties
what we will need. Later  we will provide another elliptic sequence in the non--numerically
Gorenstein case, which was introduced in \cite{NNIII}, whose definition `adapts' the numerically Gorenstein case.
The length of both sequences serve as upper bounds for the geometric genus of any analytic structure supported
on the topological type identified by the graph.
The sequence from \cite{NNIII} differs from  the one introduced by Yau,
however, our goal is to prove that their length is the same.

\subsection{The elliptic sequence, the general case, according to S. S.-T. Yau}\label{ss:ElSeqGC}
For any non--zero reduced effective cycle   $D\in L_{>0}$
we write $Z_D$ for the minimal cycle
of the full subgraph determined by $|D|=D$.

\begin{definition}\ \label{def:EllSeqGen} \cite{Yau5}, \cite[Def. 3.3]{Yau1}
Let $E$ be the exceptional set of the minimal resolution $\phi:\tX\to X$ of an elliptic singularity.
Let $C$ be the minimally elliptic cycle.

If $(C,Z_{min})<0$ then the elliptic sequence consists of one element, namely $\{Z_{min}\}$.

If $(C,Z_{min})=0$, let $D_1$ be the maximal connected subvariety (reduced effective cycle)
of $E$ containing the support $|C|$,
such that $(E_v,Z_{min})=0$ for all $E_v\subset D_1$. Since $Z_{min}^2<0$, $D_1\not=E$.

Assume that the term $D_{i-1}$ of the elliptic sequence is already defined. If $(C,Z_{D_{i-1}})=0$,
let $D_i$ be the maximal connected subvariety of $D_{i-1}$, containing $|C|$, such that
$(E_v,Z_{D_{i-1}})=0$ for all $E_v\subset D_i$. Again, $D_i$ is properly contained in $D_{i-1}$.
This process stops after finitely many steps, say with $D_\ell$, which has the property
$(C,Z_{D_\ell})<0$.

Write $D_0:=E$, $Z_{D_0}:=Z_{min}$. Then the elliptic sequence is $\{Z_{D_0},\ldots, Z_{D_\ell}\}$.
Its length is $\ell+1$ ($\ell\geq 0$).
\end{definition}

\subsubsection{}
The sequence satisfies several properties, see e.g. \cite{Yau5,Yau1}. E.g., the next ones are immediate.
From the construction $|C|\subseteq D_\ell\varsubsetneq \cdots \varsubsetneq D_0$. Note also that
$C=Z_{|C|}$ (valid for minimally elliptic singularities). Hence   $C\leq Z_{D_\ell}< \cdots< Z_{D_0}$ too.

Moreover, by a general property of the minimal cycles,
 $h^0(\cO_{Z_{D_j}})=1$. On the other hand
  $h^1(\cO_{Z_{D_j}})\geq h^1(\cO_C)=1$ and  $\chi(Z_{D_j})\geq 0$ (ellipticity),
  hence  $\chi(Z_{D_j})=0$   for all $0\leq j\leq \ell$. Furthermore, from the construction,
   $(Z_{D_k},Z_{D_j})=0$ for any $k\not=j$.

\begin{lemma}\label{lem:ellseq}
%(a)  If $\ell>1$ then $Z_{D_1}+C\not\leq Z_{D_0}$.
%(Hence $Z_{D_{j}}+C\not\leq  Z_{D_{j-1}}$ too for any $j<\ell$.)
For any $0\leq k \leq \ell$ set $F_{k}:=\sum_{i=0}^{k} Z_{D_i}$.
Then $\chi(F_k)=0$ and $F_k\in\cS$.
\end{lemma}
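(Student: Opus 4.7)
The proof splits along its two assertions. For $\chi(F_k)=0$ the plan is an immediate induction on $k$, combining the polarization identity $\chi(a+b)=\chi(a)+\chi(b)-(a,b)$ with the two facts noted just before the lemma: $\chi(Z_{D_i})=0$ for all $i$, and $(Z_{D_i},Z_{D_j})=0$ for $i\neq j$. The latter gives $(F_{k-1},Z_{D_k})=\sum_{i<k}(Z_{D_i},Z_{D_k})=0$, so $\chi(F_k)=\chi(F_{k-1})+\chi(Z_{D_k})=\chi(F_{k-1})$, and iterating down to $\chi(Z_{D_0})=0$ finishes this part.

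For $F_k\in\calS$ I would verify $(F_k,E_v)\leq 0$ for each vertex $v$ by case analysis on the largest index $j=j(v)$ with $E_v\subset D_j$. The defining property of $D_{i+1}$, namely $(Z_{D_i},E_w)=0$ for all $E_w\subset D_{i+1}$, forces $(Z_{D_i},E_v)=0$ for every $i<j$; and $(Z_{D_j},E_v)\leq 0$ by Lipman's condition on the subgraph $D_j$. For $i>j$ the cycle $Z_{D_i}$ is supported in the connected subgraph $D_{j+1}$, which does not contain $E_v$; since the dual graph is a tree, $E_v$ has at most one neighbour $E_{w^*}$ in $D_{j+1}$. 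Thus when $j=k$, or when $j<k$ and $E_v$ has no neighbour in $D_{j+1}$, all $i>j$ contributions vanish and $(F_k,E_v)\leq 0$ follows at once.

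The delicate case is $j<k$ with $E_v$ adjacent to some $E_{w^*}\in D_{j+1}$. Here the maximality built into the definition of $D_{j+1}$ as the maximal connected subgraph of $D_j$ containing $|C|$ on which $(Z_{D_j},\cdot)$ vanishes forces $(Z_{D_j},E_v)\leq -1$, since otherwise $D_{j+1}\cup\{E_v\}$ would be a strictly larger admissible subgraph. Writing $m^{(i)}_{w^*}$ for the coefficient of $Z_{D_i}$ on $E_{w^*}$, the problem reduces to
$$\sum_{i>j,\ E_{w^*}\subset D_i}m^{(i)}_{w^*}\ \leq\ -(Z_{D_j},E_v).$$
My plan is to combine two ingredients. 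First, a coefficient monotonicity: the bilinear computation $(Z_{D_i}|_{D_{i+1}},E_w)_{D_{i+1}}=-\sum_{u\sim w,\,u\in D_i\setminus D_{i+1}}m^{(i)}_u\leq 0$ shows $Z_{D_i}|_{D_{i+1}}\in\calS'(D_{i+1})$, whence minimality of $Z_{D_{i+1}}$ in $\calS(D_{i+1})\setminus 0$ yields $m^{(i)}_w\geq m^{(i+1)}_w$ throughout $D_{i+1}$. Second, the ellipticity bound $\chi(F_k+E_v)=\chi(F_k)+\chi(E_v)-(F_k,E_v)=1-(F_k,E_v)\geq 0$ gives $(F_k,E_v)\leq 1$. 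The main obstacle is the last mile: excluding the borderline value $(F_k,E_v)=1$, which neither ingredient rules out in isolation. The route I would follow is to show that $(F_k,E_v)=1$ would produce an effective $0$-$\chi$ cycle $F_k+E_v$ whose extra component $E_v$ attaches to $D_{j+1}$ in a way contradicting the maximality of $D_{j+1}$ in Yau's construction; engineering this clean contradiction by tracking precisely where the excess intersection comes from is the technical heart of the argument.
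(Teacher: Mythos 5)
Your computation of $\chi(F_k)=0$ matches the paper's argument exactly, and the overall skeleton of the $F_k\in\calS$ claim (case analysis on the largest index $j$ with $E_v\subset D_j$; the orthogonality $(Z_{D_i},E_v)=0$ for $i<j$; the tree structure forcing at most one bridging neighbour $E_{w^*}$ in $D_{j+1}$; the observation that maximality of $D_{j+1}$ gives $(Z_{D_j},E_v)\leq -1$) is correct and mirrors the paper's setup. But you have explicitly left a gap in the final and genuinely delicate case, and neither of the two ingredients you assemble closes it.

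The paper's proof supplies exactly the missing ideas. First, it shows that the contribution of the level just below, namely $(Z_{D_{j+1}},E_v)$ (in your indexing), equals $1$ \emph{exactly}, not merely $\geq 1$: since $Z_{D_{j+1}}$ (minimal cycle on the subgraph $D_{j+1}$) can be completed to a Laufer computation sequence for $Z_{D_j}$ whose very first added curve is $E_v$, and since $\chi(Z_{D_{j+1}})=\chi(Z_{D_j})=0$, Lemma \ref{lem:12} forces the coefficient of the bridging component $E_{w^*}$ in $Z_{D_{j+1}}$ to be $1$. Your coefficient monotonicity $m^{(i)}_{w^*}\geq m^{(i+1)}_{w^*}$ goes the wrong direction to give this upper bound; what you need is an absolute value, and Laufer's algorithm provides it. Combined with $(Z_{D_j},E_v)\leq -1$ this already gives $(Z_{D_j}+Z_{D_{j+1}},E_v)\leq 0$. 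Second, to kill the contributions from $i\geq j+2$, the paper does \emph{not} invoke $\chi(F_k+E_v)\geq 0$ (which, as you note, only yields $(F_k,E_v)\leq 1$ and is too weak); instead it applies ellipticity to the much smaller three-term cycle $l=Z_{D_{j+2}}+Z_{D_{j+1}}+E_v$. If $(Z_{D_{j+2}},E_v)>0$, then $(Z_{D_{j+1}}+Z_{D_{j+2}},E_v)\geq 2$ and $\chi(l)=1-(Z_{D_{j+1}}+Z_{D_{j+2}},E_v)<0$, contradicting ellipticity. Hence $(Z_{D_{j+2}},E_v)=0$, which means $E_{w^*}\notin D_{j+2}$, and therefore all deeper contributions vanish as well. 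These two steps — the exact multiplicity from Laufer's algorithm, and the ellipticity contradiction on the \emph{local} three-term cycle rather than $F_k+E_v$ — are precisely the ``technical heart'' you left open.
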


\begin{proof}
%(a)  First we claim  that $\Pam(Z_{D_i})=1$. Indeed,
%from Laufer's computation sequence of  $Z_{D_i}$ one has
% $h^0(\cO_{Z_{D_i}})=1$ and $h^1(\cO_{Z_{D_i}})=\Pam(Z_{D_i})$. Since $\chi(Z_{D_i})=0$ the %claim follows.
%
%Assume  that  $Z_{D_1}+C\leq Z_{D_0}$.  Then,   by Lemma \ref{lem:MINgamma},
% $1=\Pam(Z_{D_1})\leq \Pam(Z_{D_1}+C)\leq \Pam(Z_{D_0})=1$, hence $\Pam(Z_{D_1}+C)=1$.
%
%Furthermore, since $(Z_{D_1},C)=0$, we get that $\chi(Z_{D_1}+C)=0$ too.
% Therefore,
% a sequence  connecting 0 and $Z_{D_1}+C$, which realises $\Pam(Z_{D_1}+C)=1$,  has the %following
%properties: the second term is (necessarily) some $E_v\subset B_1$ (with $\chi(E_v)=1$)
%and starting from $E_v$ the sequence is
%$\chi$--non-increasing, and all terms are supported in $B_1$.
%Hence the sequence can be part of a
%Laufer's  computation sequence of $Z_{B_1}$, in particular we get
%$Z_{B_1}+C\leq Z_{B_1}$, which is a contradiction.

$\chi(F_k)=0$ follows from the above discussions. Next we prove $F_k\in\calS$.

If $E_v\subset |D_j|$ for all $j\leq k$ then $(E_v,Z_{D_j})\leq 0$ for all $j$, hence
$(F_k,E_v)\leq 0$.

Assume that $E_v\subset D_{j-1}$ but $E_v\not\subset D_j$ for some $1\leq j\leq k$.
 Then $(E_v,Z_{D_i})\leq 0$ for $i\leq j-1$ since $D_{j-1}\subset D_i$.
 If $(E_v,Z_{D_j})=0$, that is, $E_v$ does not intersect the support $D_j$, then $E_v$ does not
 intersect the smaller supports $\{D_i\}_{k\geq i\geq j}$ either, hence $(E_v , Z_{D_i})=0$ for all $i\geq j$.
Hence we are done again.

Next,  assume that $(E_v,Z_{D_j})>0$, hence $E_v$ intersects $D_j$, say along the component $E_u$.
Then we observe two facts. First, $(\dagger)$ $(E_v,Z_{D_{j-1}})<0$ since otherwise $E_v$ would be in $D_j$.
Second, $Z_{D_j}$ can be completed by a computation sequence to $Z_{D_0}$ by adding $E_v$
at the first step,
hence the multiplicity of $E_u$ in $Z_{D_j}$ should be 1 (by Laufer's algorithm, and from the fact that both $\chi(Z_{D_j})$ and $\chi(Z_{D_0})$ are zero).
Therefore,
 $(\ddagger)$ $(Z_{D_j}, E_v)= 1$. Then  $(\dagger)$ and $(\ddagger)$ imply
$(Z_{D_j}+Z_{D_{j-1}},E_v)\leq 0$. If $j=k$ then again we are done.

If $j<k$ then $(Z_{D_j},C)=0$ and $Z_{D_{j+1}}$ exists, and it is a summand of $F_k$.
We show that $(Z_{D_{j+1}},E_v)=0$.
This means that $E_v$ does not intersect the support $D_{j+1}$ hence neither the smaller supports
$\{D_i\}_{k\geq i\geq j+1}$, hence $(Z_{D_{i}},E_v)=0$ for all $i\geq j+1$.

Assume the opposite, that is,   $(Z_{D_{j+1}},E_v)>0$.
 Then necessarily  $(Z_{D_{j}},E_v)>0$ too.
Then consider the cycle $l:= Z_{D_{j+1}}+Z_{D_{j}}+E_v$. Then $\chi(l)=
\chi( Z_{D_{j+1}}+Z_{D_{j}})+1-( Z_{D_{j+1}}+Z_{D_{j}}, E_v)=
1-( Z_{D_{j+1}}+Z_{D_{j}}, E_v)<0$, a fact which contradicts the ellipticity of the graph.
\end{proof}

\begin{proposition}\label{prop:MINES}   \ Assume that the minimal resolution is good.
Then  $\Pam\leq  \ell+1 $.
\end{proposition}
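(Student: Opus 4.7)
The plan is to exhibit an explicit increasing path $\gamma$ from $0$ to $F_\ell=\sum_{i=0}^\ell Z_{D_i}$ with $S(\gamma)=\ell+1$, and then invoke Lemma \ref{lem:MINgamma}. Assuming the cycle inequality $\lfloor Z_K\rfloor\leq F_\ell$ (the one point needing extra care, see below), parts (a)--(b) of that lemma yield
$$\Pam=\Pam(\lfloor Z_K\rfloor)\leq \Pam(F_\ell)\leq S(\gamma)=\ell+1.$$

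The path is built as the concatenation of $\ell+1$ sub-paths
$0\rightsquigarrow F_0\rightsquigarrow F_1\rightsquigarrow\cdots\rightsquigarrow F_\ell$,
each arranged to contribute exactly $1$ to $S(\gamma)$. For the first sub-path $0\rightsquigarrow F_0=Z_{min}$, I would use the Laufer-type algorithm of Lemma \ref{lem:cs2} starting from any $E_{v(0)}$: the initial step $0\to E_{v(0)}$ has $p_0=\max\{0,0-1\}=0$, while the subsequent Laufer steps keep $\chi$ non-increasing (since $(l_i,E_{v(i)})\geq 1$ at every step) and drive it from $\chi(E_{v(0)})=1$ down to $\chi(Z_{min})=0$ because the graph is elliptic. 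Hence the total drop, which equals $\sum p_i$, is exactly $1$.

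For each remaining sub-path $F_{k-1}\rightsquigarrow F_k$ with $1\leq k\leq\ell$, the key observation, obtained by a short induction from the recursive definition of the $D_i$, is that every $E_v\subset D_k$ satisfies $(E_v,Z_{D_j})=0$ for all $j<k$; in particular $(F_{k-1},E_v)=0$. Starting the sub-path by adding any such $E_v$ therefore produces $p=0$ and lifts $\chi$ from $0$ to $1$. Next I would run Laufer's algorithm confined to $D_k$, at each step adding an $E_w\subset D_k$ with $(l_i,E_w)>0$: by the same orthogonality, the global intersection numbers of $l_i$ with components of $D_k$ coincide with the intrinsic intersection numbers inside $D_k$, so the procedure behaves as Laufer's algorithm in the elliptic subgraph $D_k$, terminates precisely at $l_i=F_{k-1}+Z_{D_k}=F_k$ (since $Z_{D_k}$ is the minimal cycle of $D_k$), and contributes a further monotone $\chi$-drop of $1$. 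Hence its $S$-contribution is again exactly $1$, and summation gives $S(\gamma)=\ell+1$.

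The step I expect to be the main obstacle is the cycle comparison $\lfloor Z_K\rfloor\leq F_\ell$, needed to apply Lemma \ref{lem:MINgamma}(b). It is automatic in the numerically Gorenstein case, where $Z_K=F_\ell\in L$; in general it should follow from $Z_K\in\calS'$, $F_\ell\in\calS$, $\chi(F_\ell)=0$ and the inclusions $|C|\subseteq D_\ell\subsetneq\cdots\subsetneq D_0$ (possibly using Lemma \ref{lem:szk} to reduce to a comparison on the support of $Z_K-s_{[Z_K]}$). This is the one point of the argument that is not automatic from the path construction itself.
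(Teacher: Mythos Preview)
Your construction of the path from $0$ to $F_\ell$ with $S(\gamma)=\ell+1$ is correct and is essentially the same as the paper's (the paper routes each sub-path through $C$ first and then extends to $Z_{D_j}$, but this makes no difference at that stage). The genuine gap is the inequality $\lfloor Z_K\rfloor\leq F_\ell$: it is \emph{false} in general. In Example~\ref{ex:new} one computes $\lfloor Z_K\rfloor$ with coefficients $4,9,14,11,9,7,4,2,1,0$ (and $7$ on the pendant vertex), whereas $F_\ell=F_1=Z_{min}+C=(Z_{B_0}+Z_{B_1})+E_1$ has coefficients $4,8,12,10,8,6,4,2,1,1$ (and $6$); so already at the second vertex $\lfloor Z_K\rfloor>F_\ell$. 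The two cycles are in fact incomparable, so neither direction of Lemma~\ref{lem:MINgamma} applies. Your suggested reduction via Lemma~\ref{lem:szk} to the support of $Z_K-s_{[Z_K]}$ does not help either, since $\lfloor Z_K\rfloor$ strictly exceeds $Z_K-s_{[Z_K]}$ on that support in the same example.

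The paper circumvents this problem not by comparing $F_\ell$ with $\lfloor Z_K\rfloor$, but by \emph{extending} the path beyond $F_\ell$ without incurring any further $\chi$-drops. Concretely, it chooses (via Lemma~\ref{lem:12}(b)) the computation sequence $\{z^c_i\}$ for $C$ so that its last added component is a fixed $E_0\subset|C|$ with $(E_0,Z_{D_\ell})<0$; this is the only step of $\{z^c_i\}$ (and of its extensions to $Z_{D_j}$) where a drop occurs. One then concatenates $F_\ell+\{z^c_i\}$, $2F_\ell+\{z^c_i\}$, etc. Since $F_\ell\in\calS$ (Lemma~\ref{lem:ellseq}), each step satisfies $(nF_\ell+z^c_i,E_{v(i)})\leq(z^c_i,E_{v(i)})\leq 2$, and at the unique potential drop step $E_{v(i)}=E_0$ one has in addition $(F_\ell,E_0)=(Z_{D_\ell},E_0)<0$, so the bound is $\leq 1$. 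Hence no new contribution to $S$. As $nF_\ell$ eventually dominates $\lfloor Z_K\rfloor$, this gives $\Pam\leq\ell+1$. The special choice of $E_0$ is essential here; your ``start with any $E_v$'' version of the sub-paths would not control the drop locations well enough for this continuation argument.
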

\begin{proof}
Since $(C,Z_{D_m})<0$ there exists $E_0\subset |C|$ with $(E_0,Z_{D_m})<0$. By  Lemma \ref{lem:12}
there exists a computation sequence $\{z_i\}_{i=1}^t$ of $C=Z_{|C|}$ such that
$(z_i,E_{v(i)})=1$ for $i<t-1$ and $(z_{t-1},E_{v(t-1)})=2$, where $E_{v(t-1)}$ is exactly  $E_0$.
We mark this step by  ($\dagger$).
(The first cycle $z_1$ can be any base--cycle  $E_1$ from  $|C|$.)

This computation sequence can be completed to a computation sequence of $Z_{D_j}$, $\{z^{(j)}_i\}_{i=1}^{t(j)}$,
such that for $t\leq i< t(j)$ one has $(z^{(j)}_i,E_{v(i)})=1$ (for $\chi(Z_{D_j})=\chi(C)=0$).

If we concatenate these sequences, $\{z^{(0)}_i\}_i,\ \{z^{(1)}_i\}_i,\ \ldots, \{z^{(\ell)}_i\}$,
we get $\{z^{c}_i\}_{i=1}^{\sum_jt(j)}$, which connects 0 (or $E_1$) to
$F_\ell:=\sum_{j=0}^\ell Z_{D_j}$, and in it
exactly $\ell+1$ times happens that $\chi(z^c_{i+1})<\chi(z^c_i)$. When this happens then
$\chi(z^c_{i+1})=\chi(z^c_i)-1$, and they occur exactly when we add
the last component of $C$,
namely during steps marked by ($\dagger$).

Next, we continue the sequence $\{z^c\}_i$ with $F_\ell+\{z^c\}_i$.
Note that $F_\ell$ has two key properties: $(F_\ell,E_0)=(Z_{D_\ell},E_0)<0$ and
$F_\ell \in\calS$, cf. Lemma  \ref{lem:ellseq}. Therefore,
$(F_\ell+z^c_i,E_{v(i)})\leq (z_i^c,E_{v(i)})\leq 2$, and
$(F_\ell+z^c_i,E_{v(i)})$ might be 2 only at steps marked by ($\dagger$).
But at these steps
$(F_\ell,E_{v(i)})=(F_\ell, E_0)=(Z_{D_m}, E_0)<0$,
hence $(F_\ell+z^c_i,E_{v(i)})<2$ always, and
 the $\chi$-values along the sequence  $F_\ell+\{z^c\}_i$ are non-decreasing.
 This remains true
for $nF_\ell+\{z^c\}_i$ for any $n\geq 1$, hence we get an infinite sequence
$\{\ell_i\}_i$ whose multiplicities tend to infinity,
and which satisfies $\sum_i\max\{0,\chi(\ell_i)-\chi(\ell_{i+1})\}=\ell+1$. This proves the  inequality
$\Pam\leq  \ell+1$.
\end{proof}
\begin{corollary}\label{cor:Yau} For any analytic structure supported by an elliptic graph with length $\ell+1$ one has $p_g\leq \ell+1$.
\end{corollary}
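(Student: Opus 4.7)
The plan is to chain together two inequalities that are already in place: the universal bound $p_g \leq \Pam$ from subsection \ref{bek:pgpath}, which holds for any analytic structure supported on a fixed resolution graph, and the combinatorial bound $\Pam \leq \ell+1$ just established in Proposition \ref{prop:MINES}. Composing these gives $p_g \leq \Pam \leq \ell+1$ at once.

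More concretely, first I would note that the length $\ell+1$ of the Yau elliptic sequence is a purely topological (graph--theoretic) invariant, so fixing an analytic structure $(X,o)$ on the given elliptic topological type does not alter it. Second, I would invoke \eqref{eq:pgpath}: for any such analytic structure with the fixed (good) resolution graph $\Gamma$ one has $p_g(X,o) \leq \Pam$, the inequality coming from the standard splitting inequality along any increasing path. Third, I would apply Proposition \ref{prop:MINES} (which was proved using the computation sequence of $C$ from Lemma \ref{lem:12} concatenated over the Yau sequence $Z_{D_0}, \ldots, Z_{D_\ell}$, combined with the fact from Lemma \ref{lem:ellseq} that each $F_k \in \calS$) to conclude $\Pam \leq \ell + 1$.

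Chaining the two inequalities yields the claim. There is no real obstacle here: the work has been done in Proposition \ref{prop:MINES}, and the corollary is just the translation from the purely topological upper bound $\Pam$ to the analytic invariant $p_g$ via \eqref{eq:pgpath}. The one small caveat I would flag, consistent with the standing hypothesis in Proposition \ref{prop:MINES}, is that $\Pam$ is defined on a minimal good resolution, so the statement should be read with that choice of resolution graph in mind; this is precisely the same assumption already imposed in the preceding proposition and does not introduce anything new.
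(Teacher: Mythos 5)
Your proposal is correct and is essentially identical to the paper's own proof, which simply says to combine \eqref{eq:pgpath} with Proposition~\ref{prop:MINES}. Your caveat about the minimal good resolution hypothesis (inherited from Proposition~\ref{prop:MINES}) is well placed; the paper itself only recovers the unconditional form of this inequality indirectly, via the remark following the corollary citing Yau's Laufer-sequence bound and Stevens's identification of the two sequences.
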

\begin{proof} Combine (\ref{eq:pgpath}) with Proposition \ref{prop:MINES}. \end{proof}

\begin{remark}
%\marginpar{lelleneorizni hogy Yaunal nem jelenik-e meg a
% Corrollaryban egyenlotlenseg}
S.S.-T. Yau in \cite{Yau5} considered another sequence, the `Laufer sequence',
and he proved that
$p_g$ is not greater than the length of the Laufer sequence.
On the other hand, J. Stevens in
(the first preprint version of) \cite{Stevens84} proved that  the elliptic sequence and the Laufer sequence coincide.
Hence, these two results imply the inequality of  Corollary \ref{cor:Yau}.
\end{remark}

\subsection{The elliptic sequence in the numerically Gorenstein case}
\label{ss:NGES} See also \cite{weakly,Nfive,OkumaEll}.

The elliptic sequence consists of a sequence of integral cycles
$\{Z_{B_j}\}_{j=0}^m$, where $Z_{B_j}$ is the minimal
 cycle supported on  the connected  reduced cycle $B_j$.  $\{B_j\}_{j=0}^m$ are defined inductively as follows.  For $j=0$ one takes $B_0=E$, hence $Z_{B_0}=Z_{min}$. Then $C\leq Z_{min}=Z_{B_0}\leq Z_K$.
 If $Z_{B_0}=Z_K$ then we stop, $m=0$, this situation corresponds to the minimally elliptic case.

 Otherwise one takes $B_1:=|Z_K-Z_{B_0}|$. One  verifies that   $|C|\subseteq B_1\varsubsetneq B_0$,
 $B_1$ is connected,
 and it supports a numerically Gorenstein elliptic
 topological type with canonical cycle $Z_K-Z_{B_0}$.
 (Furthermore, $(E_v,Z_{B_0})=0$ for any $E_v\subset B_1$. The proof of all these facts are
  similar to the proof of Lemma \ref{lem:b0} below.)
  In particular, $C\leq Z_{B_1}\leq
 Z_K-Z_{B_0}$. Then we repeat the inductive argument. If $Z_{B_1}=  Z_K-Z_{B_0}$, then we stop,
 $m=1$. Otherwise,  we define $B_2:=|Z_K-Z_{B_0}-Z_{B_1}|$. $B_2$ again is connected, $|C|\subseteq B_2
 \varsubsetneq B_1$, and supports a  numerically Gorenstein elliptic
 topological type with canonical cycle $Z_K-Z_{B_0}-Z_{B_1}$. After finite steps we get
 $Z_{B_m}=Z_K-Z_{B_0}-\cdots-Z_{B_{m-1}}$, hence the minimal cycle and the canonical cycle on
 $B_m$ coincide. This means that $B_m$ supports a minimally elliptic singularity with $Z_{B_m}=C$.

 We say that the length of the elliptic sequence $\{Z_{B_j}\}_{j=0}^m$ is $m+1$.

%If we contract the connected exceptional divisor $B_j\subset \tilde{X}$, we obtain a
%unique singular point; this will be denoted by $(\tilde{X}/B_j,o_j)$
%($0\leq j\leq m$).
It is also convenient to introduce the notations
$$C_j=\sum_{i=0}^j Z_{B_i} \ \ \mbox{and} \ \  C'_j=\sum_{i=j}^m Z_{B_i}\  \ \  (0\leq j\leq m).$$
By these notations, $C_0=Z_{min}$, $C'_m=C$, and $C_m=C'_0=Z_K$.

The next lemma summarizes the immediate properties of the elliptic sequence.

\begin{lemma}\label{e211}

(a)\ $B_0=E,\ B_1=|Z_K-\zbz|,\ B_2=|Z_K-\zbz-\zbe|,\ \ldots, \ B_m=|C|; $
each $B_j$ is connected and the inclusions $B_{j+1}\subset B_j$ are strict.
Moreover, $Z_{min}=\zbz\supset\zbe\supset \cdots\supset \zm=C$.

(b)\ If $E_v\subset B_{j+1}$ then $(E_v,\zj)=0$ for all $v$ and $j$.
In particular, $(\zi,\zj)=(C_i,\zj)=0$ for all $0\leq i<j\leq m$.

(c)\ $Z_K=\sum_{i=0}^m\zi$.

(d)\ $(E_v,C'_i)=(E_v,Z_K)$ for any $E_v\subset |C'_i|$. In other words,
$C'_i$ is the canonical cycle of $|C'_i|=B_i$. % (i.e. of $(\tilde{X}/B_t,p_t)$).

(e)\ $C_i\in \cS$. %For any $E_v\subset E$ the inequality $E_v\cdot C_t\leq 0$ holds.
\end{lemma}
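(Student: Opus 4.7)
The plan is to work through the five parts in the order (c), (b), (d), (e), treating (a) as the output of the inductive construction already sketched just before the statement. The induction keeps track of the claim that, at every stage $j$, the cycle $C'_j = \sum_{i=j}^{m} Z_{B_i}$ equals $Z_K - C_{j-1}$ (with the convention $C_{-1}:=0$) and is the canonical cycle of the subgraph on $B_j$, i.e.\ $(E_v, C'_j) = E_v^2 + 2$ for every $E_v \subset B_j$.

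The only real obstacle is the inductive step itself: one must verify that $B_{j+1} := |C'_j - Z_{B_j}|$ is connected and contains $|C|$. Here $C'_j - Z_{B_j}$ is effective because $Z_{B_j}$ is the minimal element of the sub-Lipman cone on $B_j$ while $C'_j$ lies in that cone by the inductive adjunction identity; moreover $\chi(C'_j - Z_{B_j})=0$ by ellipticity, which pins down the support. A minimal-cycle computation sequence on $B_j$ connecting $Z_{B_j}$ to $C'_j$ then exhibits $|C|$ inside $|C'_j - Z_{B_j}|$ and forces connectedness, exactly in the style of Lemma \ref{lem:b0}. This gives (a); statement (c) is then the terminal identity $Z_{B_m} = C'_m = Z_K - C_{m-1}$, rewritten as $Z_K = \sum_{i=0}^{m} Z_{B_i}$.

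Granted the identification of $C'_j$ with the canonical cycle of $B_j$, statement (b) follows at once: for $E_v \subset B_{j+1}$, adjunction on both $B_j$ and $B_{j+1}$ yields $(E_v, C'_j) = E_v^2 + 2 = (E_v, C'_{j+1})$; subtracting and using $C'_j = Z_{B_j} + C'_{j+1}$ gives $(E_v, Z_{B_j}) = 0$. The orthogonalities $(Z_{B_i}, Z_{B_j}) = 0$ and $(C_i, Z_{B_j}) = 0$ for $i < j$ then follow by expanding $Z_{B_j}$ as a non-negative combination of components in $B_j \subset B_{i+1}$ (and summing over $k \le i$). Statement (d) is an immediate corollary: by (c), $Z_K - C'_i = \sum_{k<i} Z_{B_k}$, and each term pairs to zero with any $E_v \subset B_i \subset B_{k+1}$ by (b), so $(E_v, C'_i) = (E_v, Z_K)$.

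For (e), decompose $C_i = Z_K - C'_{i+1}$ via (c) (with the convention $C'_{m+1}:=0$, so that $i=m$ reduces to the known $Z_K \in \calS'$) and analyse $(E_v, C_i)$ for each $v\in\calv$. If $E_v \subset B_{i+1}$, then (d) applied to $B_{i+1}$ gives $(E_v, C'_{i+1}) = (E_v, Z_K)$, whence $(E_v, C_i) = 0$. If $E_v \not\subset B_{i+1}$, then $E_v$ is not a component of $C'_{i+1}$, so $(E_v, C'_{i+1}) \geq 0$, while $(E_v, Z_K) = E_v^2 + 2 \leq 0$ because the rational-homology-sphere hypothesis makes every $E_v$ rational and the minimality of $\tX$ then forces $E_v^2 \leq -2$. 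In either case $(E_v, C_i) \leq 0$, so $C_i \in \calS$.
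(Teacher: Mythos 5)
Your proposal is correct and follows essentially the same route as the paper, which dismisses (a)--(d) with the single line ``follow from the construction.'' You have made explicit what that construction supplies: the inductive invariant that $C'_j$ is the canonical cycle of $B_j$, together with the orthogonality $(Z_{B_j}, C'_j - Z_{B_j}) = 0$ obtained by comparing $\chi(C'_j)=\chi(Z_{B_j})=0$ with $\chi(C'_j-Z_{B_j})\geq 0$ (ellipticity) and $(Z_{B_j}, C'_j - Z_{B_j})\leq 0$ (Lipman cone). One small organizational wrinkle: you prove (b) \emph{from} the canonical-cycle identification, which is precisely (d), and then present (d) as ``an immediate corollary'' of (b) and (c). That is circular as written, but harmless, since (d) is already established directly at each step of the induction; it would be cleaner to state that both (b) and (d) are direct outputs of the inductive step rather than deriving one from the other.

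For (e), the paper splits on $E_v\subset B_i$ versus $E_v\not\subset B_i$: in the first case it bounds $(E_v, Z_{B_j})\leq 0$ for each $j\leq i$ directly from the fact that $Z_{B_j}$ lies in the Lipman cone of $B_j\supset B_i$; in the second it uses $C_i = Z_K - C'_{i+1}$ exactly as you do. Your split on $E_v\subset B_{i+1}$ versus not is equally valid and gives the sharper equality $(E_v,C_i)=0$ in the first case, at the cost of invoking (d), whereas the paper's first case stands on the Lipman-cone property alone. Both routes are sound; there is no gap.
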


\begin{proof} {\it (a)--(d)} follow from the construction. The proof of
{\it (e)} is as follows.
If $E_v\subset B_i$ then $(E_v,\zj)\leq 0$ for any $j\leq i$,
 hence $(E_v,C_i)\leq 0$.
If $E_v\not\subset B_i$ then $(E_v,C_i)=(E_v,Z_K-C'_{i+1})$. Now, $(E_v,Z_K)\leq
0$ (by the minimality of the resolution) and $(E_v,C'_{i+1})\geq 0$
(because $|C'_{i+1}|\subset B_i$). \end{proof}

\begin{proposition}\label{prop:EllSeqthesame} Fix a numerically Gorenstein elliptic
minimal graph.
Consider  the elliptic sequence $\{D_j\}_{j=0}^{\ell}$
defined in \ref{def:EllSeqGen} and $\{B_j\}_{j=0}^{m}$ defined in \ref{ss:NGES}.
 Then $m=\ell$ and $D_j=B_j$ for any $j$. In particular (cf. Corollary \ref{cor:Yau}),
 $p_g\leq m+1$.
\end{proposition}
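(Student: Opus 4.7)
The plan is to prove the equality by a single induction on $j$, simultaneously checking that (a) the two constructions stop at the same step and (b) if both proceed past step $j$, then $D_{j+1}=B_{j+1}$. The base case $D_0=B_0=E$, $Z_{D_0}=Z_{B_0}=Z_{min}$ is immediate from the two definitions, so I assume inductively that $D_i=B_i$ (hence $Z_{D_i}=Z_{B_i}$) for all $i\leq j$.

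For the matching of stopping conditions, I would argue as follows. If $B_j=|C|$ (i.e.\ $j=m$) then $Z_{B_j}=C$ and negative definiteness gives $(C,Z_{B_j})=(C,C)<0$, so Yau's procedure also halts. Conversely, if $j<m$, then $|C|=B_m\subseteq B_{j+1}$, and Lemma~\ref{e211}(b) yields $(E_v,Z_{B_j})=0$ for every $E_v\subset B_{j+1}$; in particular $(C,Z_{B_j})=0$, so Yau's procedure continues. Hence $\ell=m$ as soon as the step-by-step equality $D_j=B_j$ is established.

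For the inductive step (assuming both sequences continue past $j$), the inclusion $B_{j+1}\subseteq D_{j+1}$ is easy: by Lemma~\ref{e211}(a)--(b), $B_{j+1}$ is a connected subvariety of $B_j=D_j$ that contains $|C|$ and satisfies $(E_v,Z_{B_j})=0$ on every component, so the maximality that defines $D_{j+1}$ forces the inclusion. The reverse inclusion $D_{j+1}\subseteq B_{j+1}$ is the main obstacle. Given $E_v\in D_{j+1}\setminus B_{j+1}$, I would note first that for $i<j$ one automatically has $(E_v,Z_{B_i})=0$ (since $E_v\in B_j\subseteq B_{i+1}$ and Lemma~\ref{e211}(b) applies), and by assumption also $(E_v,Z_{B_j})=0$; summing gives $(E_v,C_j)=0$. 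Numerical Gorensteinness through Lemma~\ref{e211}(c) lets me write $Z_K=C_j+C'_{j+1}$, so
\[
(E_v,Z_K)=(E_v,C'_{j+1}).
\]
Since the minimal resolution puts $Z_K\in\calS'$, the left side is $\leq 0$; on the other hand $C'_{j+1}$ is effective and supported on $B_{j+1}\not\ni E_v$, so the right side is $\geq 0$. Both therefore vanish, and because $C'_{j+1}$ has strictly positive coefficients on every vertex of $B_{j+1}$, the vanishing $(E_v,C'_{j+1})=0$ means $E_v$ has no neighbor in $B_{j+1}$.

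That last conclusion finishes the argument: the component of $E_v$ in the set $\{E_w\subset B_j:(E_w,Z_{B_j})=0\}$ is disjoint from $B_{j+1}$, hence cannot contain $|C|\subseteq B_{j+1}$, contradicting $E_v\in D_{j+1}$. So $D_{j+1}=B_{j+1}$, the induction goes through, and $\ell=m$. The promised inequality $p_g\leq m+1$ is then Corollary~\ref{cor:Yau}. The only delicate point is the $D_{j+1}\subseteq B_{j+1}$ direction, where numerical Gorensteinness is essential: it is exactly what lets one convert the local condition $(E_v,Z_{B_j})=0$ into control of $(E_v,C'_{j+1})$ via the global identity $Z_K=C_j+C'_{j+1}$.
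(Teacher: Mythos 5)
Your proposal is correct and proceeds along essentially the same route as the paper: both proofs compare $D_{j+1}$ and $B_{j+1}$ by evaluating $(E_v,Z_K)$ against the decomposition $Z_K=C_j+C'_{j+1}$ and invoking minimality of the resolution ($Z_K\in\calS'$) together with the positivity of the cycles supported on $B_{j+1}$. The paper's variant (given only for $j=0$, with an ``and then induct'') chooses $E_v\in D_{j+1}\setminus B_{j+1}$ \emph{adjacent} to $B_{j+1}$ (such a vertex exists because $D_{j+1}$ is connected and meets $B_{j+1}$), concluding immediately that $(E_v,Z_K)>0$ — an outright contradiction. You instead take an arbitrary $E_v\in D_{j+1}\setminus B_{j+1}$, deduce $(E_v,Z_K)=0$, hence that $E_v$ has no neighbor in $B_{j+1}$, and then appeal to connectivity of $D_{j+1}$. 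This is slightly longer but avoids having to single out the ``right'' $E_v$.

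One small phrasing issue in your closing sentence: the connected component of $E_v$ in $\{E_w\subset B_j:(E_w,Z_{B_j})=0\}$ is by definition $D_{j+1}$ itself (since $E_v\in D_{j+1}$), which certainly does contain $|C|$ — so as written the claim is self-contradictory rather than producing the contradiction cleanly. What you actually want to say is that the same computation applies to every vertex of $D_{j+1}\setminus B_{j+1}$, so no edge of the dual graph joins $D_{j+1}\setminus B_{j+1}$ to $D_{j+1}\cap B_{j+1}\supseteq|C|$; since both sets are nonempty, $D_{j+1}$ would be disconnected, contradicting its definition. Alternatively, simply follow the paper and choose $E_v$ adjacent to $B_{j+1}$ from the start, which short-circuits the connectivity step. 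Either fix makes the final step airtight; the substantive content of the proof is correct.
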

\begin{proof}
Clearly, $D_0=B_0=E$. Moreover, the continuation of both sequences is decided by the same
criterion:  by \ref{e211}{\it (b)} one has $(Z_{D_0},C)=0\ \Leftrightarrow\ Z_K>Z_{min}$.
  Next we show that
 $D_1=B_1$. From \ref{e211}{\it (b)} we get $B_1\subset D_1$. Assume that $B_1\not=D_1$.
 Since $D_1$ is connected,
 then there exists $E_v\subset D_1$, in the support of $D_1-B_1$,  such that
 $(E_v,B_1)>0$. Then, $(E_v,Z_{B_0})=0$, but $(E_v,Z_K-Z_{B_0})=
 (E_v, \sum_{i\geq 1}Z_{B_i})>0$.
 Hence $(E_v,Z_K)>0$, a fact which contradicts with the minimality of the resolution.
Then we proceed by induction.
\end{proof}
\begin{remark}\label{rem:Gorexists}
Any numerically Gorenstein topological type admits a Gorenstein analytic structure
\cite{PPP}. Hence, any numerically Gorenstein elliptic topological type
is realized by a  Gorenstein elliptic analytic structure. For analytic characterizations
of such structures see \cite{weakly}. One of the characterizations is
that $(X,o)$ is Gorenstein if and only if $p_g=m+1$.
Hence, the Gorenstein structure are exactly those ones which realizes the
 maximal $m+1$.
\end{remark}

\subsection {\bf The elliptic sequence in the  non--numerically Gorenstein case, according to \cite{NNIII}.} \label{ss:NNIII}
Assume that $Z_K\not\in L$, that is, $r_{[Z_K]}\not=0$. Since the resolution is minimal,
$Z_K\in \calS'$, hence $Z_K\geq s_{[Z_K]}$. Since the graph is not rational,
by Lemma \ref{lem:szk} $Z_K> s_{[Z_K]}$.
We will use the following notations: $B_{-1}:=E$, $Z_{B_{-1}}:=s_{[Z_K]}$ and $B_0:=|Z_K-s_{[Z_K]}|$.
(Note that $Z_{B_{-1}}\in L'\setminus L$.)
\begin{lemma}\label{lem:b0}\ \cite{NNIII}
 (a) $B_0$ is connected, $C\subseteq B_0\varsubsetneq E$,  and
$(E_v,Z_{B_{-1}})=0$ for any $E_v\subset B_0$.

(b) $B_0$ supports a numerically Gorenstein
elliptic topological type with canonical cycle $Z_K-s_{[Z_K]}$.
\end{lemma}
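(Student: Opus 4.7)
Set $s := s_{[Z_K]}$ and $\alpha := Z_K - s$. By Lemma \ref{lem:szk} and the hypothesis $Z_K \notin L$ one has $\alpha \in L_{> 0}$, and $B_0 = |\alpha|$ by definition. The core of the argument is the short chain
\[
\chi(\alpha) \;=\; -\tfrac{1}{2}(\alpha,\, \alpha - Z_K) \;=\; \tfrac{1}{2}(\alpha,\, s) \;=\; \tfrac{1}{2}\sum_{E_v \subset B_0} \alpha_v\,(s, E_v) \;\leq\; 0,
\]
where the last inequality uses $s \in \calS'$ (so $(s, E_v) \leq 0$ for all $v$) together with $\alpha_v > 0$ on $B_0$. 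On the other hand $\alpha \in L_{> 0}$ and ellipticity of the graph force $\chi(\alpha) \geq 0$. Combining, $\chi(\alpha) = 0$ and $(s, E_v) = 0$ for every $E_v \subset B_0$, which is the intersection identity asserted in (a).

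Given this identity, the rest follows quickly. For (b): $(\alpha, E_v) = (Z_K, E_v) - (s, E_v) = E_v^2 + 2$ for every $E_v \subset B_0$, so $\alpha$ satisfies the adjunction relations of the full subgraph on $B_0$; it is therefore the canonical cycle of that subgraph, showing $B_0$ is numerically Gorenstein with canonical cycle $Z_K - s_{[Z_K]}$. Ellipticity of $B_0$ follows from $|C| \subseteq B_0$ (proved next), combined with the general fact recalled in \S\ref{ss:ell} that a connected full subgraph of an elliptic graph is either elliptic or rational, the rational alternative being excluded by the presence of $|C|$.

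For the remaining claims in (a): the identity $\chi(\alpha) = 0$ with $\alpha \in L_{>0}$, together with the defining property of $C$ ($D \in L_{>0}$, $\chi(D) = 0 \Rightarrow D \geq C$), gives $\alpha \geq C$, hence $|C| \subseteq B_0$. Strict inclusion $B_0 \varsubsetneq E$ holds because $B_0 = E$ would force $(s, E_v) = 0$ for \emph{every} $v$ and hence $s = 0$ by non-degeneracy of the intersection form, contradicting $[s] = [Z_K] \neq 0$. Connectivity of $B_0$ follows by contradiction: a decomposition of $B_0$ into two disjoint non-empty full subgraphs of $E$ with no connecting edge would yield $\alpha = \alpha_1 + \alpha_2$ with $(\alpha_1, \alpha_2) = 0$, and then $0 = \chi(\alpha) = \chi(\alpha_1) + \chi(\alpha_2)$ with both summands nonnegative would force $\chi(\alpha_i) = 0$, hence $\alpha_i \geq C$ for each $i$; but $|C|$ is connected while $|\alpha_1|$, $|\alpha_2|$ are disjoint, a contradiction.

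No single step stands out as a genuine obstacle: the essence of the proof is the observation that $\chi(\alpha)$ is squeezed between $0$ (from ellipticity) and $0$ (from $s \in \calS'$). This single sandwich simultaneously delivers the vanishing of $\chi(\alpha)$ \emph{and} the orthogonality of $s$ against every $E_v \subset B_0$, from which the whole lemma unfolds essentially formally.
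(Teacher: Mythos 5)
Your proof is correct and follows essentially the same route as the paper: the paper expands $0=\chi(Z_K)=\chi(l)+\chi(s_{[Z_K]})-(l,s_{[Z_K]})$ and notes $\chi(l)=\chi(s_{[Z_K]})$, which after simplification is exactly your identity $\chi(\alpha)=(\alpha,s)/2$, and the two nonnegativity constraints (ellipticity and $s\in\calS'$) squeeze both to zero just as you argue. The connectedness-via-uniqueness-of-$C$ argument, the adjunction check for (b), and the deduction $B_0\ne E$ are the same; you have merely filled in a couple of steps (e.g.\ the nondegeneracy argument for $B_0\neq E$) that the paper leaves implicit, and your attribution of $\alpha>0$ to ``$Z_K\notin L$'' should more precisely read ``the graph is not rational,'' as in \S\ref{ss:NNIII}.
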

For the convenience of the reader we insert the proof from \cite{NNIII} here as well.
%\marginpar{maradjon a bizonyitas ?????}
\begin{proof}
{\it (a)}
Write $l:=Z_K-s_{[Z_K]}$. Then $\chi(s_{[Z_K]})=\chi( Z_K-l)=\chi(l)$.  Since $(X,o)$ is elliptic
$\chi(s_{[Z_K]})=\chi(l)\geq 0$ $(\dag)$. Also, $(s_{[Z_K]},l)\leq 0$
since $s_{[Z_K]}\in\calS'$ $(\ddag)$.
On the other hand, $0=\chi(Z_K)=\chi(l+s_{[Z_K]})=\chi(l)+\chi(s_{[Z_K]})-(l,s_{[Z_K]})$.
Then by $(\dag)$ and $(\ddag)$ the expressions from the right hand side are $\geq 0$, hence
necessarily $\chi(l)=(l,s_{[Z_K]})=0$. If $l$ has more connected components, say
$\cup_il_i$, then $\chi(l_i)=0$ for all $i$, hence each $l_i$  contains/dominates  a minimally
elliptic cycle (cf. \cite{Laufer77}), a fact which contradicts  the uniqueness of the minimally
elliptic cycle. Hence $|l|=B_0$ is connected and $|C|\subset B_0$. Furthermore,
$(l, s_{[Z_K]})=0$ shows that $|l|\not=E$.

{\it (b)} $C\subseteq B_0\varsubsetneq E$ shows that $\min_{|l|\subset B_0, \, l>0}
 \chi(l)=0$, hence $B_0$ supports an elliptic topological type. Moreover, from   $(l, s_{[Z_K]})=0$
 we read that for any $E_v$ from the support of $l$ one has
$(E_v,s_{[Z_K]})=0$, hence $(E_v,Z_K-s_{[Z_K]})=(E_v,Z_K)$, hence $Z_K-s_{[Z_K]}\in L$ is
the canonical cycle on $B_0$.
\end{proof}
Then, as a continuation of the sequence, starting from $B_0$ and its integral canonical class $Z_K-s_{[Z_K]}$  we construct
the sequence $\{Z_{B_j}\}_{j=0}^m$ as in the numerically Gorenstein case.

We say that the elliptic sequence $\{Z_{B_j}\}_{j=-1}^m$ has length $m+1$ and `pre--term'
$Z_{B_{-1}}=s_{[Z_k]}\in L'$.

In order to have a uniform notation, in the numerically Gorenstein case we set
$Z_{B_{-1}}:=0$ (which, in fact, it is $s_{[Z_k]}$).
In any case, from  above (see also \cite[2.11]{weakly}), for latter references,
\begin{equation}\label{eq:orthogonal}
(E_v,Z_{B_j}) \ \ \mbox{for any $E_v\subset B_{j+1}$} \ \ \ (-1\leq j< m).
\end{equation}
Set
$C_t:=\sum_{i=-1}^t Z_{B_i}$ and $C_t':=\sum_{i=t}^m Z_{B_i}$, $-1\leq t\leq m$.
E.g. $C_m=Z_K$ and, in general,  $C'_j$ is the canonical cycle  of $B_j$.
Furthermore, $\chi(Z_{B_j})=\chi(C_j)=\chi(C_j')=0$.

\begin{example}\label{ex:new}
Consider the next elliptic graph

\begin{picture}(200,50)(-120,0)
%\put(10,30){\circle*{4}}
%\put(180,30){\makebox(0,0){$\cdots$}}
%\put(70,40){\makebox(0,0){\small{$-2$}}}
%\put(90,40){\makebox(0,0){\small{$-1$}}}
\put(70,30){\circle*{4}}\put(90,30){\circle*{4}}\put(110,30){\circle*{4}}\put(130,30){\circle*{4}}
\put(50,30){\circle*{4}}\put(130,30){\circle*{4}}\put(150,30){\circle*{4}}
\put(-30,30){\circle*{4}}
\put(110,40){\makebox(0,0){\small{$-3$}}}\put(130,40){\makebox(0,0){\small{$-3$}}}
%\put(200,40){\makebox(0,0){\small{$-2$}}}
%\put(130,40){\makebox(0,0){\small{$-2$}}}\put(150,40){\makebox(0,0){\small{$-2$}}}
\put(150,20){\makebox(0,0){\small{$E_1$}}}
\put(130,20){\makebox(0,0){\small{$E_2$}}}
%\put(80,10){\makebox(0,0){\small{$-3$}}}
\put(-10,30){\circle*{4}}
\put(10,30){\circle*{4}}
\put(30,30){\circle*{4}}
%\put(150,30){\circle*{4}}
\put(10,10){\circle*{4}}
\put(-30,30){\line(1,0){180}}\put(10,10){\line(0,1){20}}
%\put(210,10){\line(0,1){20}}
%\put(190,30){\line(1,0){10}}
\end{picture}

\noindent where the $(-2)$--vertices are  unmarked.
 $Z_K$ and $s_{[Z_K]}$ are

 \begin{picture}(500,45)(70,0)

\put(70,30){\makebox(0,0){\tiny{14/3}}}
\put(90,30){\makebox(0,0){\tiny{28/3}}}
\put(110,30){\makebox(0,0){\tiny{42/3}}}
\put(110,10){\makebox(0,0){\tiny{21/3}}}
\put(130,30){\makebox(0,0){\tiny{35/3}}}
\put(150,30){\makebox(0,0){\tiny{28/3}}}
\put(170,30){\makebox(0,0){\tiny{21/3}}}
\put(190,30){\makebox(0,0){\tiny{14/3}}}
\put(210,30){\makebox(0,0){\tiny{7/3}}}
\put(230,30){\makebox(0,0){\tiny{4/3}}}
\put(250,30){\makebox(0,0){\tiny{2/3}}}

\put(300,30){\makebox(0,0){\tiny{2/3}}}
\put(320,30){\makebox(0,0){\tiny{4/3}}}
\put(340,30){\makebox(0,0){\tiny{6/3}}}
\put(340,10){\makebox(0,0){\tiny{3/3}}}
\put(360,30){\makebox(0,0){\tiny{5/3}}}
\put(380,30){\makebox(0,0){\tiny{4/3}}}
\put(400,30){\makebox(0,0){\tiny{3/3}}}
\put(420,30){\makebox(0,0){\tiny{2/3}}}
\put(440,30){\makebox(0,0){\tiny{1/3}}}
\put(460,30){\makebox(0,0){\tiny{1/3}}}
\put(480,30){\makebox(0,0){\tiny{2/3}}}
\end{picture}

\noindent $B_0$ is obtained by deleting $E_1$ from $E$, while
$B_1$ by deleting $E_1$ and $E_2$, hence $B_1=|C|$. The length is  $m+1=2$.
Furthermore,
$C_{-1}=s_{[Z_K]}$, $C_0=s_{[Z_K]}+ Z_{B_0}$ and
$C_1=Z_K=s_{[Z_K]}+
Z_{B_0}+Z_{B_1}$; they are not integral cycles.

On the other hand, $D_0=E$ and $D_1=|C|$ (since $(Z_{min}, E_2)<0$).
$F_0=Z_{min}$ (which equals $Z_{B_0}+E_1$)
 and $F_1=Z_{min}+C$. These are integral cycles. The length is $\ell+1=2$.
\end{example}

In the above example $E_2$ from the support of
$B_0$ satisfies $(Z_{min},E_2)<0$. This is a general phenomenon,
a fact, which provides the `starting bridge' between the two elliptic
sequences $\{D_j\}_j$ and $\{B_j\}_j$.

\begin{proposition}\label{prop:Bo}
(a)  There exists $E_v$ in the support of $B_0$ with  $(E_v, Z_{min})<0$.

(b) Any numerically Gorenstein connected subgraph is contained in $B_0$.
In particular, the largest numerically Gorenstein connected subgraph is $B_0$.
\end{proposition}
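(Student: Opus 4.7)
The plan is to prove (b) first, which gives a clean extremality characterization of $B_0$ as the maximal numerically Gorenstein connected subgraph, and then to approach (a) using this structural understanding.

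For (b), fix a numerically Gorenstein connected subgraph $B\subseteq E$ with integer canonical cycle $Z^B_K\in L$ supported on $B$. I would first verify the restriction inequality $Z_K|_B\ge Z^B_K$: for $v\in B$,
\[
(E_v,Z_K|_B)_B=(E_v,Z_K)_E-(E_v,Z_K|_{E\setminus B})_E\le E_v^2+2=(E_v,Z^B_K)_B,
\]
using $(E_v,Z_K|_{E\setminus B})_E\ge 0$. Hence $Z_K|_B-Z^B_K$ lies in the subgraph dual cone $\calS'_B$, and since $\calS'_B$ is generated over $\bZ_{\ge 0}$ by $\{E^*_v\}_{v\in B}$, each of which has strictly positive coordinates in the $\{E_w\}_{w\in B}$ basis (positivity of the inverse of the associated $M$-matrix), we get $Z_K|_B\ge Z^B_K$. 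Next, I would show $Z_K-Z^B_K\in\calS'$: for $v\in B$ the numerical Gorenstein property on $B$ together with adjunction on $E$ yields $(E_v,Z_K-Z^B_K)_E=0$, and for $v\notin B$ the inequalities $(E_v,Z_K)_E\le 0$ and $(E_v,Z^B_K)_E\ge 0$ give $(E_v,Z_K-Z^B_K)_E\le 0$. Since $Z^B_K\in L$, the class $[Z_K-Z^B_K]=[Z_K]$ in $H$, and the minimality of $s_{[Z_K]}$ in $\calS'\cap[Z_K]$ then forces $Z^B_K\le Z_K-s_{[Z_K]}$. As $|Z_K-s_{[Z_K]}|=B_0$ and $Z^B_K$ has strictly positive coefficients on $B$ in the cases of interest (the degenerate case $Z^B_K=0$ being a purely rational $(-2)$-configuration which is handled separately), this yields $B\subseteq B_0$.

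For (a), my strategy is to argue by contradiction. Assume $(E_v,Z_{min})=0$ for every $v\in B_0$. Combined with $(E_v,s_{[Z_K]})=0$ for $v\in B_0$ from Lemma \ref{lem:b0}, both $Z_{min}$ and $s_{[Z_K]}$ are orthogonal to every exceptional curve in $B_0$. Decomposing $Z_{min}=Y+W$ with $Y=Z_{min}|_{B_0}$ and $W=Z_{min}|_{E\setminus B_0}$, the restriction $Y$ lies in $\calS_{B_0}$ (hence $Y\ge Z_{B_0}$), the orthogonality assumption forces $(Y,W)=-Y^2$, and the identity $\chi(Z_{min})=0$ delivers the rigid relation $-Y^2=\chi_{B_0}(Y)+\chi_E(W)$ with both summands non-negative. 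My plan to close the argument is to use the internal elliptic sequence $\{Z_{B_j}\}_{j=0}^m$ of the numerically Gorenstein elliptic graph supported on $B_0$, together with the boundary orthogonality of $s_{[Z_K]}$, to subtract $Z_{B_0}$ (or, if needed, some $Z_{B_j}$) from $Y$ while preserving the Lipman-cone condition on all of $E$, thereby producing a cycle in $\calS\cap L_{>0}$ strictly smaller than $Z_{min}$ and contradicting its minimality.

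The main obstacle will be closing the contradiction in (a). The tension lies in the coupling between $Y$ and $W$ across $\partial B_0$: at each boundary vertex $v\in B_0$ adjacent to $E\setminus B_0$, orthogonality forces $(E_v,Y)_{B_0}=-\sum_{u\notin B_0,\,u\sim v}W_u<0$, so any modification of $Y$ inside $B_0$ risks disturbing the intersection numbers against neighbours outside $B_0$. Producing the desired smaller cycle cleanly requires a Laufer-type computation sequence on $B_0$ applied to its elliptic sequence, with precise tracking of the boundary intersection numbers against $W$, in order to convert the abstract identity $-Y^2=\chi_{B_0}(Y)+\chi_E(W)$ into an explicit cycle beating the minimality of $Z_{min}$.
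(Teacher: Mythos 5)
Your treatment of (b) is essentially the paper's own argument: the crucial step is showing $Z_K-Z_K^B\in\calS'$ by splitting into $E_v\subset B$ (where adjunction forces the pairing to vanish) and $E_v\not\subset B$ (where minimality of the resolution and effectivity force it to be $\leq 0$), and then invoking the minimality of $s_{[Z_K]}$ in $\calS'\cap[Z_K]$. Your opening verification that $Z_K|_B\geq Z_K^B$ is not actually used in the conclusion and can be dropped. You are right to flag the degenerate case $Z_K^B=0$ (an ADE configuration): the paper's ``$Z\leq Z_K-s_{[Z_K]}$, or $I\subset B_0$'' is silent on this, and the inclusion indeed only follows once $Z_K^B$ has full support on $B$; in the application (to $B_0(D_1)$, which contains $|C|$) this is automatic.

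For (a), however, your proposal has a genuine gap, and you concede it yourself. You set up a decomposition $Z_{min}=Y+W$ across $\partial B_0$, derive the identity $-Y^2=\chi_{B_0}(Y)+\chi_E(W)$, and then state that ``the main obstacle will be closing the contradiction'' by producing a cycle in $\calS\cap L_{>0}$ strictly smaller than $Z_{min}$ via Laufer-type manipulations of the internal elliptic sequence of $B_0$. That final step --- the one that would deliver the contradiction --- is exactly what is missing. As you note, any modification of $Y$ inside $B_0$ disturbs the boundary intersection numbers with $W$, and you do not exhibit the cycle. The paper sidesteps all of this with an analytic trick: since $Z_{min}\in\calS$, there is an analytic structure in which $Z_{min}$ is realized as the exceptional part of $\mathrm{div}(f\circ\phi)$ for some function germ $f$ (Pichon). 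If $(E_v,Z_{min})=0$ for all $E_v\subset B_0$, the strict transform of $\{f=0\}$ misses $B_0$, so $f$ trivializes $\calO_{\tX}(-Z_{min})$ on a neighbourhood of $B_0$; Lemma \ref{lem:szk} applied twice (to $\calO_{\tX}$ and to $\calO_{\tX}(-Z_{min})$) then gives $h^1(\tX,\calO_{\tX}(-Z_{min}))=p_g$, while the long exact sequence of $0\to\calO_{\tX}(-Z_{min})\to\calO_{\tX}\to\calO_{Z_{min}}\to 0$ together with $h^1(\calO_{Z_{min}})=1$ (ellipticity) gives $h^1(\tX,\calO_{\tX}(-Z_{min}))=p_g-1$. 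The contradiction $p_g=p_g-1$ is immediate. If you want a purely combinatorial proof of (a), you would have to actually construct the contradicting cycle --- as written, your argument does not close.
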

\begin{proof}
(a) Though the statement is topological, it is convenient to fix a special analytic structure on $(X,o)$, which produces a very fast and elegant proof. Since $Z_{min}\in\calS$,
there exists an analytic structure for which this cycle is realized
as a divisor of $f\circ \phi$ for a certain  function $f$ \cite{P01}.
Assume that
$(E_v, Z_{min})=0$ for any $E_v\subset B_0$. Then the strict transforms of $\{f=0\}$
do not intersect $B_0$, hence $\calO_{\tX}(-Z_{min})|_{Z_K(B_0)}$ is trivialized by $f$.
Therefore, using Lemma \ref{lem:szk} for the structure  sheaf, we get that
$h^1(Z_k-s_{[Z_K]}, \calO_{\tX}(-Z_{min}))=h^1(Z_k-s_{[Z_K]},\calO_{Z_k-s_{[Z_K]}})=p_g$.
On the other hand, using Lemma \ref{lem:szk} for $\calO_{\tX}(-Z_{min})$ we get
$h^1(Z_k-s_{[Z_K]}, \calO_{\tX}(-Z_{min}))=h^1(\tX,\calO_{\tX}(-Z_{min})) $.
In particular, $h^1(\tX,\calO_{\tX}(-Z_{min})) =p_g$.

Now consider the long cohomology exact sequence associated with $0\to \calO_{\tX}(-Z_{min})\to \calO_{\tX}
\to \calO_{Z_{min}}\to 0$, and using the well--know fact that
$H^0(\calO_{\tX})\to H^0( \calO_{Z_{min}})=\C$ is onto, we get that
$h^1(\calO_{\tX}(-Z_{min}))=p_g-h^1(\calO_{Z_{min}})=p_g-1$.
This leads to a contradiction.

(b)
Let $I$ be a connected  support of a numerically Gorenstein subgraph.
Let the canonical cycle on $I$ be $Z \in L$. Then   $(Z_K- Z , E_v) = 0$ for all $E_v \subset |Z|$.
Else,  if  $E_v \not\subset  |Z|$,  we have $(Z_K, E_v) \leq 0$ and $(Z, E_v) \geq 0$,
so $(Z_K - Z, E_v) \leq 0$.
This means, that $Z_K - Z \in S'$.
Therefore,
 $Z_K-Z\geq s_{[Z_K]}$. This reads as $Z\leq Z_K-s_{[Z_K]}$, or $I\subset B_0$.
\end{proof}

\begin{remark}\label{rem:Gorenstein}
For a non--numerically Gorestein graph one can consider both elliptic sequences,
namely $\{D_j\}_{j=0}^\ell$ and $\{B_j\}_{j=0}^m$.
For the first one we know from Corollary \ref{cor:Yau} that
 $p_g\leq \ell+1$. For the second one we know from
 Lemma \ref{lem:szk} that  $p_g=h^1(\calO_{B_0})$ and also $h^1(\calO_{B_0})\leq m+1$,
 cf. \cite{weakly}. Furthermore, we know that on $Z_K(B_0)$ the maximal
 $p_g=m+1$ can be realized by (any) Gorenstein structure, see also Remark
 \ref{rem:Gorexists}.
 (This is one of the main advantages of the sequence $\{B_j\}_{j=0}^m$:
 it produces numerically Gorenstein supports, and the elliptic length of the
 numerically Gorenstein support $B_0$ coincides with the length of $\Gamma$.)
 The Gorenstein analytic structure of $B_0$ (or of  a small
 tubular neighbourhood of $\cup_{v\in B_0}E_v$ in $\tX$) can be extended to an
 analytic structure of $\tX$. This shows that the graph $\Gamma$  supports an
 analytic structure with $p_g=m+1$.

 The analogous statement for $\{D_j\}_{j=0}^\ell$ (which guarantees the existence
 of any analytic structure with $p_g=\ell+1$) is not clear yet. This will be
 a consequence of the next theorem.
\end{remark}

\begin{theorem}\label{th:main}
 Fix any (not necessarily numerically Gorenstein) elliptic graph
(associated with a minimal resolution). Then

(a) $m+1 = \ell+1$.

(b) In particular, there exists an analytic type for which $p_g = \ell+1$.

(c)  Assume that the minimal resolution is good. Then 
$m+1 = \ell+1= \Pam$. Therefore, the general
topological upper bound $p_g\leq \Pam$ for $p_g$
in the case of  elliptic singularities is sharp: the equality can be realized
by some analytic structure.
\end{theorem}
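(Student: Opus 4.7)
The plan is to prove (a), (b), and (c) in order: (a) is the substantive combinatorial content, and both (b) and (c) follow quickly once (a) is in hand.

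For part (a), the numerically Gorenstein case is already covered by Proposition \ref{prop:EllSeqthesame}, which gives $D_j=B_j$ for every $j$ and hence $\ell=m$. The real work lies in the non--numerically Gorenstein case, where the new sequence has the non-integral pre-term $Z_{B_{-1}}=s_{[Z_K]}$ while Yau's sequence begins with $Z_{min}(E)\in L$. My approach is to show that $D_j=B_j$ for every $1\le j\le m$ and that both sequences terminate at the minimally elliptic cycle $C=Z_{B_m}$, forcing $\ell=m$. The first Yau step $D_0=E\rightsquigarrow D_1$ must, by Proposition \ref{prop:Bo}(a), avoid some vertex of $B_0$ where $Z_{min}(E)$ has strictly negative intersection, so that $D_1\subsetneq B_0$; the key claim, and the main technical step, is that in fact $D_1=B_1$ exactly -- i.e.\ Yau's first jump lands at the same support as the step $B_0\rightsquigarrow B_1$ of the new sequence on the numerically Gorenstein subgraph $B_0$. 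Once $D_1=B_1$ is established, one has $Z_{D_1}=Z_{B_1}$, and the subgraph $B_1$ carries a numerically Gorenstein residual elliptic germ with canonical cycle $C'_1$ (Lemma \ref{e211}(d)); the argument of Proposition \ref{prop:EllSeqthesame} then transplants verbatim and, by induction on $j$, yields $D_{j+1}=B_{j+1}$ throughout, until one hits $Z_{B_m}=C$ and both sequences must stop.

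Part (b) is an immediate consequence of (a) and Remark \ref{rem:Gorenstein}: the subgraph $B_0$, being numerically Gorenstein elliptic, admits a Gorenstein analytic structure (via \cite{PPP,weakly}), which extends from a tubular neighbourhood of $\cup_{v\in B_0}E_v$ to an analytic structure on all of $\tX$; Lemma \ref{lem:szk} gives $p_g=h^1(\cO_{Z_K-s_{[Z_K]}})=m+1$, and by (a) this equals $\ell+1$. Part (c) combines this with Proposition \ref{prop:MINES} (which, under the minimal-good-resolution assumption, supplies $\Pam\le\ell+1$) and the universal topological bound (\ref{eq:pgpath}) $p_g\le\Pam$: the structure from (b) gives $\ell+1=p_g\le\Pam\le\ell+1$, whence $\Pam=\ell+1=m+1$.

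The principal obstacle is the base case of (a), namely the identity $(E_v,Z_{min}(E))=0$ for every $E_v\subset B_1$ together with the companion maximality statement $(E_v,Z_{min}(E))\ne0$ for $E_v\in B_0\setminus B_1$ adjacent to $B_1$. The orthogonality relations that are directly available, $(E_v,s_{[Z_K]})=0$ on $B_0$ (Lemma \ref{lem:b0}) and $(E_v,Z_{B_0})=0$ on $B_1$ (formula (\ref{eq:orthogonal})), only tell us that the combined cycle $s_{[Z_K]}+Z_{B_0}$ is orthogonal to every $E_v\subset B_1$; but this cycle has class $[Z_K]\ne0\in H$ whereas $Z_{min}(E)\in L$ has trivial class, so the desired identity does not follow automatically. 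Bridging this class discrepancy -- presumably through a Laufer-type computation sequence for $Z_{min}(E)$ adapted to the nested filtration $B_m\subsetneq\cdots\subsetneq B_0\subsetneq E$, or via a direct analysis of the difference $Z_{min}(E)-Z_{B_0}$ on $E\setminus B_0$ -- is where the careful topological work will have to be done.
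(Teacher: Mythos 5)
Your plan for parts (b) and (c) is correct and matches the paper exactly: given (a), Remark \ref{rem:Gorenstein} supplies an analytic structure with $p_g=m+1$, and then the chain $p_g\le\Pam\le\ell+1$ (from (\ref{eq:pgpath}) and Proposition \ref{prop:MINES}) closes to give $\Pam=\ell+1=m+1$. The problem is part (a), where you take a genuinely different — and, as it stands, incomplete — route.

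You propose to prove the stronger combinatorial statement $D_j=B_j$ for all $j\ge 1$, and you correctly isolate the crux: it is not clear that Yau's first jump lands exactly on $B_1$, because the orthogonality data you have ($(E_v,s_{[Z_K]})=0$ on $B_0$ and $(E_v,Z_{B_0})=0$ on $B_1$) concerns the $\bQ$--cycles $s_{[Z_K]}$ and $Z_{B_0}$ and does not translate automatically into $(E_v,Z_{min}(E))=0$ for $E_v\subset B_1$, since $Z_{min}(E)$ lives in a different $H$--class. You acknowledge you cannot bridge this, and indeed this is a genuine gap, not a presentational one: as written, the induction has no base case. Moreover it is not even clear that the supports $D_j$ and $B_j$ agree in general for $j\ge 1$ — the theorem only asserts equality of \emph{lengths}, and the paper never claims support equality in the non--numerically Gorenstein case. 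You would first have to decide whether $D_1$ is numerically Gorenstein at all (if it were, Remark \ref{rem:UNIVERSAL}(c) would place it among the $B_i$, and then a length count would pin down $D_1=B_1$, but that length count is precisely what you are trying to prove).

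The paper sidesteps this entirely by arguing at the level of $\max p_g$ rather than at the level of supports. One direction, $m+1\le\ell+1$, is easy: realize a Gorenstein structure on $B_0$ (Remark \ref{rem:Gorenstein}), extend it to $\tX$, get $p_g=m+1$, and invoke Corollary \ref{cor:Yau} ($p_g\le\ell+1$). For the other direction the paper inducts on $\ell$: by Proposition \ref{prop:Bo} one has $B_0\not\subset D_1$, so the $B_0$--term $B_0(D_1)$ of $D_1$'s own elliptic sequence is a \emph{strict} numerically Gorenstein subgraph of $B_0$; since the Gorenstein structure on $B_0$ has cohomological cycle with full support $B_0$, any strict subsupport yields a strictly smaller maximal $p_g$, so $\max p_g(D_1)=\max p_g(B_0(D_1))<\max p_g(B_0)=m+1$. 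The inductive hypothesis gives $\max p_g(D_1)=\ell$, whence $\ell<m+1$. This argument is analytic in flavour (it needs the existence of Gorenstein representatives via \cite{PPP} and the $h^1$--maximality on the full cohomological support) rather than a direct computation with intersection numbers, which is exactly the machinery needed to bypass the class discrepancy you ran into.

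Bottom line: parts (b) and (c) are fine modulo (a); for (a) you have correctly diagnosed the obstruction but not overcome it, and the approach you sketch (proving support equality $D_1=B_1$ by a Laufer--type computation sequence) is not how the paper proceeds and may be false or at least considerably harder than the paper's $\max p_g$--comparison induction.
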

\begin{proof}
The direction $m+1 \leq  \ell+1$ follows from the discussion from
Remark \ref{rem:Gorenstein}: there exists an analytic structure with
 $p_g=m+1$; hence from Lemma \ref{cor:Yau} one has  $m+1=p_g\leq \ell+1$.

We prove $m+1 \geq  \ell+1$ by induction on $\ell$. If $\ell= 0$ it is trivial.
Next assume that $\ell > 0$ and we know the statement for singularities with
`$D$--length' $\ell$.

By Proposition \ref{prop:Bo} $B_0\not\subset D_1$ ($\dag$). Next, denote by
$B_0(D_1)$ the `$B_0$ term' of the elliptic sequence associated with $D_1$.
Then  $B_0(D_1)\subset B_0$. Indeed, $B_0(D_1)$  is included in $D_1$ and it is a
connected  numerically Gorenstein support, hence by Proposition \ref{prop:Bo}{\it (b)},
 $B_0(D_1)\subset B_0$. But this inclusion should be strict. Indeed,
 $B_0\not=  B_0(D_1) $ contradicts ($\dag$).
 Hence $B_0(D_1)\varsubsetneq B_0$.

 Let us denote by $maxp_g(D)$ the maximum $p_g$ which can be realized by different analytic structures supported on a connected support/subgraph $D$.

Now, $maxp_g(D_1)=maxp_g(B_0(D_1))$ by Lemma \ref{lem:szk}.
Since $B_0(D_1)\subset B_0$ we have $maxp_g(B_0(D_1))\leq maxp_g(B_0)$. However, since
$B_0$ is a numerically Gorenstein support, and its maximal $p_g$ is realized by a
Gorenstein structure, which has the property that its cohomological cycle is
exactly its canonical cycle with support $B_0$, any smaller support has strict smaller
$maxp_g$. Since $B_0(D_1)\varsubsetneq B_0$, we get that
$maxp_g(B_0(D_1))< maxp_g(B_0)=m+1$.

On the other hand, the $D$-length of $D_1$ is $\ell$ (since the $D$--elliptic
sequence of $D_1$ is $\{D_1, \ldots, D_\ell\}$). Hence for $D_1$ the inductive step works.
In particular, $maxp_g(D_1)=\ell$. This combined with the statements from the previous
paragraph gives $m+1=maxp_g(B_0)>maxp_g(B_0(D_1))=maxp_g(D_1)=\ell$. That is, $m+1\geq \ell+1$.

From  $m+1= \ell+1$ and Remark \ref{rem:Gorenstein}
 we get that there exists an analytic structure with
  $p_g=\ell+1$. This combined with
  $p_g\leq \Pam \leq \ell+1$ (valid for any analytic structure, cf. (\ref{eq:pgpath}) and
Proposition   \ref{prop:MINES}) we get $maxp_g(E)=\Pam=\ell+1$.
\end{proof}

\begin{remark}\label{rem:UNIVERSAL} Both elliptic sequences
 $\{D_j\}_{j}$ and $\{B_j\}_j$ have some geometric universal properties.
For more information (and proofs) the reader is
invited to consult the references below. Here we mention only the
next chosen ones (they will be not applied  in this form in this paper,  though
some related partial statements were already used).

\vspace{1mm}

(a) \cite{NBOOK} \ If $l\in\calS$ and $\chi(l)=0$ then $l\in\{0, F_0, \ldots, F_\ell\}$.

(b) \cite{NNIII} \ If $l'\in \calS'$, $[l']=[Z_K]$, and $l'\leq Z_K$ then
$l'\in\{C_{-1}, C_0,\ldots C_m\}$.

(c) \cite{NNIII} \ The support of any numerically Gorenstein connected subgraph
belongs to $\{B_i\}_{i=0}^m$.
\end{remark}

\section{Review of surgery formulae for the  Seiberg--Witten invariant}\label{s:surg}

We fix a complex normal surface singularity $(X,o)$ and one of its good resolutions $\phi:\tX\to X$.
In the sequel we will review some topological invariants associated with the link $M$ and with the resolution graph
$\Gamma$ (or, with the lattice $L$). We will adopt all the notations of Section \ref{s:prel}.
In particular, we will assume that $M$ is a rational homology sphere. We will write also $M=M(\Gamma)$,
where we think about it as the plumbed manifold associated with $\Gamma$.
For more information and more details see \cite{CDGPs,CDGEq,NJEMS,NN1,BN,LNNSurg,LNNDual}.
For an overview see also \cite{ICM,NBOOK}.

\subsection{The Seiberg--Witten invariants of the link}\label{ss:SW}
The smooth oriented 4--manifold $\tX$ admits several $spin^c$--structures.
 Let $\widetilde{\sigma}_{can}$ be the {\it canonical
$spin^c$--structure on $\widetilde{X}$} identified by $c_1(\widetilde{\sigma}_{can})=-K$.
Furthermore, let  $\sigma_{can}\in \mathrm{Spin}^c(M)$
 be its restriction to $M$, called the {\it canonical
$spin^c$--structure on $M$}.  $\mathrm{Spin}^c(M)$ is an $H$--torsor, hence
the number of $spin^c$--structures supported on the oriented 3--manifold $M$ is $|H|$.
In this note we will focus only on the canonical one.

 We denote by $\mathfrak{sw}_{\sigma}(M)\in \bQ$ the
\emph{Seiberg--Witten invariant} of $M$ indexed by the $spin^c$--structures $\sigma\in {\rm Spin}^c(M)$ (cf. \cite{Lim,Nic04}).
(We will use the sign convention of \cite{BN,NJEMS}.) Again, in this note we focus merely on the SW--invariant
associated with the canonical $spin^c$--structure, $\ssw_{can}(M) $.

In fact, it is more convenient (imposed by surgery formulae)
 to use the {\it modified Seiberg--Witten invariant} defined by
\begin{equation}\label{eq:MSW}
\overline{\ssw}_{0}(M)= -\frac{K^2 + |\calv|}{8} - \ssw_{\sigma_{can}}(M).\end{equation}
There are  several combinatorial expressions established for the Seiberg--Witten invariants.  
 For rational homology spheres,
Nicolaescu \cite{Nic04} showed  that $\mathfrak{sw}(M)$ is
equal to the Reidemeister--Turaev torsion normalized by the Casson--Walker invariant. In the case when $M$ is a negative
definite plumbed rational homology sphere, combinatorial formula for Casson--Walker invariant in terms of the plumbing graph can be found in Lescop
\cite{Lescop}, and  the Reidemeister--Turaev torsion is determined by N\'emethi and Nicolaescu \cite{NN1} using Dedekind--Fourier sums.

A different  combinatorial formula of $\{\mathfrak{sw}_\sigma(M)\}_\sigma$ was proved  in \cite{NJEMS} using qualitative properties of the coefficients of the topological
multivariable series (`zeta function') $Z(\mathbf{t})$. This note also will exploit this connection further.

\subsection{The topological Poincar\'e series $Z(\bt)$}\label{ss:Zt}
The  \emph{multivariable topological Poincar\'e series} is the
Taylor expansion $Z(\mathbf{t})=\sum_{l'} z(l')\bt^{l'}
\in\bZ[[L']] $ at the  origin of the `rational function'
\begin{equation}\label{eq:1.1}
f(\mathbf{t})=\prod_{v\in \mathcal{V}} (1-\mathbf{t}^{E^*_v})^{\delta_v-2},
\end{equation}
where
$\bt^{l'}:=\prod_{v\in \mathcal{V}}t_v^{l'_v}$  for any $l'=\sum _{v\in \mathcal{V}}l'_vE_v\in L'$ ($l'_v\in\bQ$).
It has a natural and unique decomposition according to the elements of
$h\in H$ defined by
 $Z(\mathbf{t})=\sum_{h\in H}Z_h(\mathbf{t})$, where $Z_h(\mathbf{t})=\sum_{[l']=h}z (l')\bt^{l'}$.
 Corresponding to the choice of the canonical $spin^c$--structures here we make the choice of the series
 $Z_0(\bt)$ associated with $h=0$. In this  subseries $Z_0(\bt)$ of $Z(\bt)$  all the exponents belong to $L$
 (hence, it is a `genuine' series).    The expression
(\ref{eq:1.1}) shows that  $Z(\mathbf{t})$ is supported in the \emph{Lipman cone} $\mathcal{S}'$,
in particular $Z_0(\bt)$ is supported in $\calS=\calS'\cap L$.

Recall that  all the entries of
$E_v^*$ are strict positive, hence for any
$x\in L$, $\{l'\in \calS'\,:\, l'\not\geq x\}$ is finite. In particular the next  `counting function'
of the coefficients of $Z_h$ ($h\in H$) is well--defined:
\begin{equation}\label{eq:countintro}
Q_h:\{ x\in L'\,:\, [x]=h\} \to \bZ, \ \ \ \
Q_h(x)=\sum_{l'\ngeq x,\, [l']=h} z(l').
\end{equation}
The point is that for $x$ `sufficiently deeply inside of the Lipman cone' the function $x\mapsto Q_h(x)$
behaves as a quasipolynomial $\mathfrak{Q}_h(x)$. Furthermore,  the  values $\mathfrak{Q}_h(0)$
(indexed by all $h\in H$) provide the modified Seiberg--Witten invariants of the link (indexed by the $spin^c$--structures)  \cite{NJEMS}.
E.g.,  $\mathfrak{Q}_0(0)$ is  exactly $\overline{\ssw}_{0}(M)$.
The value $\mathfrak{Q}_0(0)$ is called the `periodic constant'
of the series $Z_0(\bt)$. In this note we try to bypass the theory of periodic constants and the theory of
quasipolynomials associated with counting functions, since in the final arguments we will not need them;
in this overview we mention them just to show the line of ideas behind the scenes.

The point is that important surgery formulae are also formulated in terms of  `periodic constants'
\cite{BN,LNNSurg,LNNDual}. Here we will recall the most general (and recent) one.

\subsection{A surgery formula}\label{ss:SURG}\cite{LNNSurg}
 Let $\cali\subset \calv$ be an arbitrary non--empty subset of $\calv$,  and write
 $\calv\setminus \cali$ as the union of full
  connected subgraphs  $\cup_i\Gamma_i$. Then one has the following  formula:
  \begin{equation}\label{eq:surg}
  \overline{\ssw}_{0}(M(\Gamma))-\sum_i \ \overline{\ssw}_{0}(M(\Gamma_i))= {\rm pc} (Z_0(\bt_{\cali})),
  \end{equation}
where $Z_0(\bt_{\cali})$ is the series with reduced variables defined as $Z_0(\bt_{\cali}):=
Z_0(\bt)|_{t_v=1, v\not\in \cali}$, and $ {\rm pc} (Z_0(\bt_{\cali}))$ is its periodic constant.

Since the periodic constant is determined by a complicated regularization procedure using the
asymptotic behaviour of the counting function of the coefficients of the corresponding series,
usually it  is hardly computable. This is the reason 
why is desired to find a replacement for it.
The next formula determines it in terms of a concrete  finite sum (precise evaluation of the `dual' counting
function). Behind this result the key ingredients are the $H$--equivariant  multivariable Ehrhart  theory
 of quasipolynomials associated with the above Poincar\'e series \cite{LN,LPhd}, and the
Ehrhart--Macdonald--Stanley equivariant reciprocities
(combined with  the duality
  of $L'$ and the series $Z(\bt)$ induced by $l\leftrightarrow Z_K-l$).
The next identity, proved in \cite[Theorem 4.4.1 (b)]{LNNDual}, shows that
${\rm pc} (Z_0(\bt_{\cali}))$ equals the value of the counting function 
associated with the 
coefficients of $Z_{[Z_K]}(\bt_{\cali})$ evaluated at $Z_K$:
\begin{equation}\label{eq:dual}
{\rm pc} (Z_0(\bt_{\cali}))=Q_{[Z_K],\cali}(Z_K):=
 \sum_{l'|_\cali \ngeq Z_K|_{\cali},\ [l'] = [Z_K] } z(l'),
\end{equation}
where $l'|_{\cali}$ is the projection of $l'$ to the variables $v\in \cali$ (if $l'=\sum_vr_vE_v$ then
$l'|_{\cali}=\sum_{v\in\cali}r_vE_v$).

\section{The Seiberg--Witten invariant of links of elliptic singularities}\label{s:SW}

\subsection{The canonical SW invariant of the plumbed manifold of an elliptic graph} \label{ss:swel}\

We fix an elliptic graph $\Gamma$ as in Section \ref{s:elliptic} and we will use all the notations of that section.

Above we discussed already two topological invariants of $M$ (or $\Gamma$), namely the length of the elliptic sequence
(defined in two different ways), $m+1=\ell+1$, and also $\Pam$. Theorem \ref{th:main} established their
coincidence. The previous section introduced a third invariant, namely   $\overline{\ssw}_{0}(M(\Gamma))$.
%The main result of this section is the following.

\begin{theorem}\label{th:SWmain}  $\overline{\ssw}_{0}(M(\Gamma))=m+1$.
%1) Let's have a numerically Gorenstein ellptic graph $\mathcal{T}$ with elliptic sequence $B_0 = \calv, \cdots , %B_m$, then we have $\overline{\ssw_{0}(M)} = m+ 1$.
%
%2)  Let's have a non-numerically Gorenstein ellptic graph $\mathcal{T}$ with elliptic sequence
% $B_{-1} = \calv, \cdots , B_m$, then we have $\overline{\ssw_{0}(M)} = m+1$.
\end{theorem}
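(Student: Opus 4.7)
The plan is to prove this purely topological identity by induction on the length $m+1$ of the elliptic sequence, using the surgery formula (\ref{eq:surg}) together with the dual evaluation (\ref{eq:dual}) of the periodic constant. Heuristically, each layer $Z_{B_j}$ of the sequence should contribute exactly $+1$ to $\overline{\ssw}_{0}(M)$. For the base case $m=0$, if $\Gamma$ is numerically Gorenstein then $Z_K=Z_{min}=C$ and $\Gamma$ is minimally elliptic, for which $\overline{\ssw}_{0}(M)=1$ is the classical SWIC for Gorenstein minimally elliptic germs (here $p_g=1$). In the non--numerically Gorenstein subcase $B_0\subsetneq \calv$, and I would first apply surgery with $\cali:=\calv\setminus B_0$ to obtain
\[
\overline{\ssw}_{0}(M(\Gamma)) \,=\, \overline{\ssw}_{0}(M(B_0)) + {\rm pc}\bigl(Z_0(\bt_\cali)\bigr),
\]
reducing to the previous case after checking that the residual periodic constant vanishes.

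For the inductive step, assume $m\geq 1$ and the identity for all elliptic graphs of shorter elliptic length. Take $\cali := \calv \setminus B_1$ and delete it. By the construction of the elliptic sequence (Lemma \ref{e211} and subsection \ref{ss:NNIII}), the full subgraph on $B_1$ is connected, inherits a minimal negative-definite plumbing structure, is numerically Gorenstein elliptic, and carries the elliptic sequence $\{Z_{B_1},\ldots,Z_{B_m}\}$ of length exactly $m$. Hence by the inductive hypothesis $\overline{\ssw}_{0}(M(B_1)) = m$, and (\ref{eq:surg}) reduces the whole statement to the single identity
\[
{\rm pc}\bigl(Z_0(\bt_\cali)\bigr) \,=\, 1.
\]

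The main obstacle is this periodic-constant evaluation. Via the dual formula (\ref{eq:dual}),
\[
{\rm pc}\bigl(Z_0(\bt_\cali)\bigr) \,=\, \sum_{\substack{l'|_\cali \,\ngeq\, Z_K|_\cali \\ [l']=[Z_K]}} z(l'),
\]
a finite signed count of Taylor coefficients of the rational function $f(\bt)$ from (\ref{eq:1.1}). The decomposition $Z_K=\sum_{j\geq -1}Z_{B_j}$ (with the convention $Z_{B_{-1}}=s_{[Z_K]}$, cf.\ Lemma \ref{e211}(c) and its non--numerically Gorenstein analogue), together with the orthogonality relations (\ref{eq:orthogonal}) and the Laufer-type computation sequences of Lemma \ref{lem:cs2}, should force the contributing $l'$ to be precisely those in which only the outermost layer $Z_{B_0}$ of $Z_K$ (the part not supported on $B_1$) gets truncated along $\cali$. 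Carrying out this cancellation explicitly, either by a residue/contour analysis of the product $\prod_v(1-\bt^{E^*_v})^{\delta_v-2}$ in the $\cali$-variables, or by an iterated computation-sequence argument in the spirit of Proposition \ref{prop:MINES} and Lemma \ref{lem:ellseq}, should isolate a single surviving contribution of value $1$. Once this is checked, combining with the inductive hypothesis (and with the base-case vanishing of the periodic constant for the non--numerically Gorenstein reduction) yields $\overline{\ssw}_{0}(M(\Gamma))=m+1$. Note that the statement is purely topological, so although the heuristic $p_g=m+1$ is available from Theorem \ref{th:main}(b), the proof itself proceeds entirely inside the combinatorics of $\Gamma$ via the surgery and dual-counting machinery of Section \ref{s:surg}.
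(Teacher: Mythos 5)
Your outline matches the paper's strategy exactly: induct on the elliptic length and reduce via the surgery formula (\ref{eq:surg}) together with the dual evaluation (\ref{eq:dual}), so that the whole proof hinges on an explicit computation of the periodic constant. The differences from the paper are minor: the paper performs the reduction in two separate cuts (first $\cali=E\setminus B_0$ to reduce to the numerically Gorenstein case, where the periodic constant vanishes; then, inside the numerically Gorenstein case, $\cali=B_0\setminus B_1$), while you propose a single cut $\cali=\calv\setminus B_1$ in the inductive step. That one-step cut is fine in principle, but it forces you to handle the two contributions at once and so is slightly less clean.

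The genuine gap is precisely the step you yourself flag as ``the main obstacle'': the evaluation of
\[
\sum_{\substack{l'|_\cali \,\ngeq\, Z_K|_\cali \\ [l']=[Z_K]}} z(l').
\]
You assert that orthogonality, the Laufer computation sequences, and either a residue/contour analysis or an ``iterated computation-sequence argument'' \emph{should} isolate a single surviving term of value $1$, but you do not carry any of this out. In fact the paper's argument here is much more elementary and of a different flavor: it does not need residues, contour integrals, or computation sequences at all. The key observation is a support comparison. In the numerically Gorenstein case, $Z(\bt)$ is supported on $\calS$, so any nonzero $l$ contributing satisfies $l\geq Z_{min}$; and from $Z_K=\sum_j Z_{B_j}$ with all $Z_{B_j}$ ($j\geq 1$) supported on $B_1$, one gets $Z_K|_\cali = Z_{min}|_\cali$ on $\cali=B_0\setminus B_1$. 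Hence every nonzero term is excluded and only $l=0$ survives, giving $z(0)=1$. In the non--numerically Gorenstein reduction with $\cali=E\setminus B_0$, any $l'\in\calS'$ with $[l']=[Z_K]$ satisfies $l'\geq s_{[Z_K]}$, and $s_{[Z_K]}|_\cali=Z_K|_\cali$ since $Z_K-s_{[Z_K]}$ is supported on $B_0$; so the sum is empty and the periodic constant is $0$. Without spotting these two one-line support arguments, your proposal does not actually prove the theorem; the heavy machinery you gesture at is neither needed nor spelled out.
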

\begin{proof}
In the proof we will use an inductice procedure based on the structure of
the elliptic sequence $\{B_j\}_{j=-1}^m$ from subsection \ref{ss:NGES} and \ref{ss:NNIII}. (We also write $B_{m+1}:=\emptyset$.)

The proof is given in two steps separating the numerically and non--numerically Gorenstein cases.

{\bf Case 1.} Assume that $\Gamma$ is numerically Gorenstein. We will use induction on
$m\geq 0$. If $m=0$ then the graph is minimally elliptic. In this case any analytic structure supported on $\Gamma$ is Gorenstein with $p_g=1$,  and they are also splice quotients. Hence for them the Seiberg--Witten Invariant
Conjecture from \cite{NN1} holds, that is,  $\overline{\ssw}_{0}(M(\Gamma))=p_g$ (proved in \cite{NO08,NCL}),
hence $\overline{\ssw}_{0}(M(\Gamma))=1$. Otherwise, the  $m=0$
 case can also be proved by adopting the next inductive argument by comparing the
  graph supported on $B_0$ by the empty set.

Next, we run induction. Assume that the statement is already proved for a
graph with length $m$ and we
fix some elliptic $\Gamma$ with length $m+1$. We fix $I:=B_0\setminus B_1$. Since $M(\Gamma(B_1))$ is minimally elliptic with length $m$, we know from the inductive step that    $\overline{\ssw}_{0}(M(\Gamma(B_1)))=m$.
On the other hand, from (\ref{eq:surg}) and (\ref{eq:dual}) we have
$$\overline{\ssw}_{0}(M(\Gamma(B_0)))=\overline{\ssw}_{0}(M(\Gamma(B_1)))+
Q_{[Z_K],\cali}(Z_K).$$
Hence, we need to show that $Q_{[Z_K],\cali}(Z_K)=1$.
In this numerically Gorenstein case $Z_K\in L$, hence $[Z_K]=0\in H$, and the expression of
 $Q_{[Z_K],\cali}(Z_K)$
from the right hand side of
(\ref{eq:dual})  becomes
$\sum z(l)$,  summed over $l\in L$ with $l|_\cali \ngeq Z_K|_{\cali}$.

Since $Z_0$ is supported in $\calS$, any $l\not=0$ in the support of $Z_0$ has the property that $l\geq Z_{min}$.
On the other hand, along $\cali=B_0\setminus B_1$ we have $Z_K|_{\cali}=Z_{min}|_{\cali}$, cf.
Lemma \ref{e211}{\it (c)}. This means that any $l\not=0$ form the support of $Z_0$ satisfies $l|_{\cali}\geq Z_K|_{\cali}$. In particular, in the sum only one term is non--zero, namely the one corresponding to $l=0$ with $z(0)=1$.

{\bf Case 2.} Assume that $\Gamma$ is an elliptic graph of length $m+1$ with  $Z_K\not\in L$.
Now we set $\cali:=B_{-1}\setminus B_0=E\setminus B_0$. Since $B_0$ supports a numerically Gorentein graph, from Step 1 we already know that $\overline{\ssw}_{0}(M(\Gamma(B_0)))=m+1$. We wish to show that
$\overline{\ssw}_{0}(M(\Gamma(B_{-1})))=m+1$ too. Hence from the surgery formula (\ref{eq:surg}) we need to verify  that
the following sum is zero:
$$ \sum_{l'|_\cali \ngeq Z_K|_{\cali},\ [l'] = [Z_K] } z(l').$$

Now, we know that $z(l') = 0$ unless $ l' \in S'$.  However,  if $ l' \in S'$ and $ [l'] = [Z_K]$, then $l' \geq s_{[Z_K]}$.  But $(s_{[Z_K]})|_\cali = Z_K|_{\cali}$ (cf. \ref{ss:NNIII}). This reads as
 $l'|_{\cali} \geq Z_K|_{\cali}$ for any relevant $l'$,
which means that the above summation is summed over the empty set.
\end{proof}

\begin{remark}\label{rem:SWIC}
For an normal surface singularity $(X,o)$ with link $M$ we say that the Seiberg--Witten Invariant Conjecture (SWIC)
is satisfied if $p_g(X,o)=\overline{\ssw}_{0}(M))$. For details and several examples see
\cite{trieste,NCL,NO08,NOk,NS16,NWCasson}. By our Theorem \ref{th:main} and \ref{th:SWmain} we obtain that
SWIC is satisfied by any
elliptic singularity with rational homology sphere link,
such that the restriction of the analytic structure to $B_0$ is Gorenstein.
(The identity $\ell+1=\overline{\ssw}_{0}(M))$ can be proved using the
techniques of the lattice cohomology and graded roots as well, since the Seiberg--Witten
invariant can also be realized as  the Euler characteristic of the lattice cohomology,
 cf. \cite{NOSZ,trieste,lattice,NOSZ,NJEMS,NBOOK}. The identity $p_g=\ell+1$  was known in the Gorenstein elliptic case, cf. Remark \ref{rem:Gorexists}.)
The present proof shows the `power' of the combination of \cite{LNNSurg} and \cite{LNNDual}: they provide a surprisingly short proof for the $p_g$--formula in this elliptic case. 
\end{remark}

\end{document}